\definecolor{plum}{rgb}{0.62, 0.0, 0.77}
\def\plum{\color{plum}}
\def\wE{\wh{E}}
\def\wV{\wh{V}}
\def\wK{\wh{K}}
\def\chW{\wh{\cW}}
\def\wT{\wh{T}}
\def\cU{\mathcal U}
\newcounter{rot}
\def\cN{{\cal N}}
\def\nn{\nonumber}
\def\a{\alpha} \def\b{\beta} \def\d{\delta} 
\def\e{\varepsilon} \def\f{\phi} \def\F{{\Phi}}  \def\g{\gamma}
  \def\k{\kappa}
 \def\th{\theta}    \def\l{\lambda}
\def\La{\Lambda} \def\m{\mu} \def\n{\nu} \def\p{\pi}
\def\r{\rho}  \def\s{\sigma} 
\def\t{\tau} \def\om{\omega}  \def\U{\Upsilon}
\newtheorem{theorem}{Theorem}
\newtheorem{lemma}[theorem]{Lemma}
\newtheorem{Remark}{Remark}
\newtheorem{claim}{Claim}
\def\cW{{\mathcal W}}
\def\cX{{\mathcal X}}
\def\tE{\E}
\newcommand{\wh}[1]{\widehat{#1}}
\newcommand{\rdup}[1]{{\left\lceil #1 \right\rceil }}
\newcommand{\rdown}[1]{{\left\lfloor #1\right \rfloor}}
\newcommand{\brac}[1]{\left(#1\right)}
\newcommand{\bfrac}[2]{\left(\frac{#1}{#2}\right)}
\newcommand{\set}[1]{\left\{#1\right\}}
\def\E{\mbox{{\bf E}}}
\def\Var{\mbox{{\bf Var}}}
\def\Pr{\mbox{{\bf Pr}}}
\newcommand{\ignore}[1]{}
\newcommand{\cA}{{\cal A}}
\newcommand{\card}[1]{\left|#1\right|}
\newcommand{\beq}[2]{\begin{equation}\label{#1}#2\end{equation}}
\def\eff{\mathrm{eff}}
\def\cN{{\cal N}}
\newcommand{\mults}[1]{\begin{multline*}#1\end{multline*}}
\newcommand{\mult}[2]{\begin{multline}\label{#1}#2\end{multline}}
\def\wR{\widehat{R}}
\def\tR{\tilde{R}}
\def\wheta{\widehat{\eta}}
\def\dist{\text{dist}}
\newcommand{\pics}[2]{
\begin{tikzpicture}[scale=#1]
#2
\end{tikzpicture}}
\author{Alan Frieze\thanks{Research supported in part by NSF grant DMS1661063
}\qquad Wesley Pegden\thanks{Research supported in part by NSF grant DMS1363136}\qquad Tomasz Tkocz\thanks{Research supported in part by NSF grant DMS1955175} \\Department of Mathematical Sciences\\Carnegie Mellon University\\Pittsburgh PA15217\\U.S.A.}
\begin{document}
\title{On the cover time of the emerging giant}
\maketitle
\begin{abstract}
Let $p=\frac{1+\e}{n}$. It is known that if $N=\e^3n\to\infty$ then with high probability (w.h.p.) $G_{n,p}$ has a unique giant largest component. We show that if in addition, $\e=\e(n)\to 0$ then w.h.p. the cover time of $G_{n,p}$ is asymptotic to $n\log^2N$; previously Barlow, Ding, Nachmias and Peres had shown this up to constant multiplicative factors.
\end{abstract}
\section{Introduction}
Let $G=(V,E)$ be a connected graph with vertex set  $V$ of size $n$ and an edge set $E$. In a simple random walk $\cW$ on a graph $G$, at each step, a particle moves from its current vertex to a randomly chosen neighbor. For $v\in V$, let $C_v$ be the expected time taken for a simple random walk starting at $v$ to visit every vertex of $G$. The {\em vertex cover time} $C_G$ of $G$ is defined as $C_G=\max_{v\in V}C_v$. The (vertex) cover time of connected graphs has been extensively studied. It was shown by  Feige \cite{Feige1}, \cite{Feige2}, that for any connected graph $G$, the cover time satisfies $(1-o(1))n\log n\leq C_G\leq (1+o(1))\frac{4}{27}n^3.$ 

In a series of papers, Cooper and Frieze have asymptotically established the cover time in a variety of random graph models. The following theorem lists some of the main results. (Here $A_n\approx B_n$ if $A_n=(1+o(1))B_n$ as $n\to\infty$.)
\begin{theorem}\label{thCF}
The following asymptotic estimates for the cover time hold with high probability (w.h.p.):
\begin{description}
\item[\cite{CF1}] If $G=G_{n,p}$ with $p=\frac{c\log n}{n}$, $c>1$, then $C_G\approx \f(c)n\log n$ where $\f(c)=c\log\bfrac{c}{c-1}$.
\item[\cite{CF2}] If $G=G_{n,r}$ with $r=O(1)$, a random $r$-regular graph, then $C_G\approx \frac{r-1}{r-2}n\log n$.
\item[\cite{CF3}] Let $G=G_{n,d,r}$ with $d\geq 3$ and  $r=\bfrac{c\log n}{\Upsilon_d n}^{1/d}$ be the random geometric graph on $n$ vertices in dimension $d$\footnote{Here $\Upsilon_d$ is the volume of the Euclidean ball of radius one in $\mathbb{R}^d$. The random geometric graph $G = G_{n,d,r}$ is defined as follows: we choose $n$ points independently uniformly at random from $[0,1]^d$ to be the vertices of $G$ and two points are joined by an edge if and only if they are at most distance $r$-apart.}. Then $C_G\approx \f(c)n\log n$.
\item[\cite{CF4}] If $D=D_{n,p}$ (the random digraph counterpart of $G_{n,p}$), then $C_D\approx \f(c)n\log n$.
\end{description}
\end{theorem}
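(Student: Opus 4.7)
These four asymptotic cover-time estimates are from four separate Cooper--Frieze papers, but they all fit a single methodology, which I would follow case by case. The plan has three layers: structural properties of the host graph, computation of a first-visit rate $h_v$ for a typical vertex $v$, and a concentration step upgrading $h_v$ to the cover time.

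In the first layer, for each model I would show that w.h.p.\ (i) the minimum degree is suitably bounded below, (ii) the simple random walk mixes in time $T = \tO{1}$, and (iii) the ball of radius $\Theta(\log n/\log\log n)$ around a typical vertex is tree-like with explicit branching distribution (Poisson$(c\log n)$ for $G_{n,p}$, $(r-1)$-regular for $G_{n,r}$, an appropriate Poisson on the Euclidean ball for $G_{n,d,r}$, and in/out Poisson for $D_{n,p}$). In the second layer, for typical $v$ I would invoke the Cooper--Frieze first-visit time lemma to conclude $h_v \sim R_v/\pi(v)$, where $R_v = \sum_{t=0}^{T} p_{v,v}(t)$ is the truncated return-probability series, computed as if the neighborhood of $v$ were the infinite Galton--Watson tree with the above branching law. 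For the $G_{n,p}$ and geometric cases this yields the factor $\f(c) = c\log\bfrac{c}{c-1}$, and for the $r$-regular case the factor $\frac{r-1}{r-2}$, each emerging as the appropriate return generating series of the limiting branching tree. In the third layer, I would show that at time $t = h_v(\log n + s)$ the number of uncovered vertices is approximately Poisson with mean $n e^{-s}$, giving the upper bound by the first moment and the matching lower bound by a second-moment calculation over pairs of unvisited vertices.

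The main technical obstacle, common to all four cases, is the second-moment step: one must show that for ``most'' pairs $u,v$ the first-visit events are nearly independent, which requires ruling out pairs whose local neighborhoods overlap substantially and then carefully coupling a two-point walk to a product walk on the truncated tree. The geometric model breaks the tree-like assumption (it produces many small cycles), and the digraph model breaks reversibility (so standard conductance-based mixing bounds do not apply directly), and in each case one needs a model-specific workaround: for \cite{CF3}, a geometric exposure argument that handles the cycle density; for \cite{CF4}, a separate treatment of in-neighborhoods using the directed walk's quasi-stationary behavior.
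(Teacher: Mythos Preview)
The paper does not prove Theorem~\ref{thCF} at all: it is stated in the introduction purely as background, with each item attributed to the corresponding Cooper--Frieze paper \cite{CF1,CF2,CF3,CF4} and no argument given. So there is no ``paper's own proof'' to compare your proposal against.

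That said, your plan is a fair high-level summary of the actual Cooper--Frieze methodology in those papers: local tree-like structure plus rapid mixing feed into the first-visit-time lemma to get $\E H_v \sim R_v/\pi_v$, and then a first/second-moment argument on the set of unvisited vertices converts this into the cover time. The constants $\f(c)$ and $\frac{r-1}{r-2}$ do arise exactly as the return generating function on the limiting tree. Your caveats about \cite{CF3} (short cycles destroy the tree picture) and \cite{CF4} (non-reversibility) are also the right places to flag model-specific work. If you were to write this out in full you would need to be more careful about the definition of $R_v$ (it is a truncated sum of return probabilities at the mixing time scale, not exactly the infinite-tree Green's function, and one has to argue these agree up to $1+o(1)$), and in the digraph case the stationary distribution is itself random and must be controlled. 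But as a proof \emph{plan} for results the present paper merely quotes, this is essentially the right outline.
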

Cooper and Frieze \cite{CFgiant} also established the cover time of the giant component $C_1$ of the random graph $G_{n,p}$ with $p=c/n$, where $c>1$ is a constant. They showed in this setting that w.h.p. the cover time $C_{C_1}$ satisfies
$$C_{C_1}\approx \frac{cx(2-x)}{4(cx-\ln c)} n (\ln n)^2,$$
where $x$ denotes the solution in $(0,1)$ of $x=1-e^{-cx}$.

This raises the question as to what happens in $G_{n,p}$ if $p=(1+\e)/n,\,\e>0$ and we allow $\e\to 0$. It is known that a unique giant component emerges w.h.p. only when $\e^3n\to\infty$. Barlow, Ding, Nachmias and Peres \cite{BDNP} showed that w.h.p. 
\beq{Barl}{
C_{C_1}=\Theta(n\log^2(\e^3n)).
} 
Cooper, Frieze and Lubetzky \cite{CFL} showed that if $C_1^{(2)}$ denotes the 2-core of the giant component $C_1$ of $G_{n,p}$ ($C_1$ stripped of its attached trees), then, in this range of $p$, w.h.p.  $C_{C_1^{(2)}}\approx \frac{1}{4}\e n\log^2(\e^3n)$, but they were not able to determine the cover time of the giant $C_1$ asymptotically. We do this in the current paper, confirming their conjecture.

We prove the following theorem:
\begin{theorem}\label{th1}
Let $p = \frac{1+\e}{n}$ with $\e = \e(n) > 0$, $\e \to 0$ such that $\e^3n\to\infty$. Let $C_1$ be the giant component of $G_{n,p}$. Then w.h.p. 
$$C_{C_1}\approx n\log^2(\e^3n).$$
\end{theorem}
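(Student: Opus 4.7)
The plan is to exploit the standard decomposition $C_1 = C_1^{(2)} \cup \cT$, where $\cT$ is the pendant forest of (nearly) subcritical Galton--Watson trees attached to vertices of the 2-core, and to leverage the result of Cooper, Frieze and Lubetzky \cite{CFL} that $C_{C_1^{(2)}}\approx \frac14\e n\log^2 N = o(n\log^2 N)$. Since $\e\to 0$, the 2-core is covered in negligible time, and the cover time of $C_1$ is governed by the time needed to visit the deepest vertex of every pendant tree. The structural input I need is the standard anatomy of the emerging giant: w.h.p.\ $|C_1|\sim 2\e n$, $m:=|E(C_1)|\sim 2\e n$, there are $\Theta(\e^2 n)$ pendant trees rooted at 2-core vertices, and the maximum pendant-tree height is $h^*\sim c\,\e^{-1}\log N$ for an explicit constant $c$.

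I would then set up the first-visit time machinery used throughout the Cooper--Frieze series (see Theorem~\ref{thCF}) and refined in \cite{CFL}. The idea is to show that, starting from near-stationarity, the events $\{T_v>t\}$ for the ``hard'' vertices $v$ are approximately independent, with marginals controlled by commute-time identities. For a pendant-tree tip $u$ at depth $\ell$ along its pendant path with base $r(u)\in C_1^{(2)}$, the identity $\E_{r(u)}[T_u]+\E_u[T_{r(u)}]=2m\cdot R_{\eff}(r(u),u)=2m\ell$ together with $\E_u[T_{r(u)}]\leq \ell^2 = o(m\ell)$ gives $\E_{r(u)}[T_u]\sim 2m\ell$. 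Since the walk reaches $r(u)$ quickly from stationarity via \cite{CFL}-style 2-core expansion, this yields $\E_\pi[T_u]\sim 2m\ell\sim 2n\e\ell$.

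Plugging $\ell=h^*$ gives a maximum hitting time of order $n\log N$, and combining with the appropriate decorrelation log factor (itself of order $\log N$) recovers $n\log^2 N$. For the upper bound I would use a union bound over all uncovered vertices after time $t=(1+\delta)n\log^2 N$, showing that the expected number is $o(1)$. For the lower bound I would carry out a second-moment argument on the number of uncovered deep tips at time $t=(1-\delta)n\log^2 N$, using the approximate independence of first-visit events at tips rooted in well-separated 2-core vertices.

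The main technical difficulty is extracting the exact asymptotic constant $1$, as naive Matthews-type bounds yield only $O(n\log^2 N)$ with a constant strictly greater than $1$. Two ingredients will require special care. First, the projected walk on $C_1^{(2)}$ must mix well despite long trapping excursions into deep pendant trees, so that first-visit events at tips in distant trees behave as nearly independent. Second, the extreme tail of the pendant-tree height distribution must be truncated so that its contribution is absorbed by the Matthews logarithm and does not spoil the leading constant. With these in place, the matching upper and lower bounds combine to give $C_{C_1}\approx n\log^2 N$ w.h.p., as claimed.
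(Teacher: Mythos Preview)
Your proposal takes a genuinely different route from the paper. The paper does \emph{not} use the Cooper--Frieze first-visit-time machinery at all; it instead uses the Gaussian Free Field connection. Via Zhai's concentration theorem (Theorem~\ref{Zhai-theorem}) it reduces the problem to showing $C_{C_1}\approx |E(C_1)|M^2$ with $M=\E\max_v\eta_v$, proves $|E(C_1)|\approx 2\e n$, and then establishes $M\approx (2\e)^{-1/2}\log N$ (Theorem~\ref{Mexp}). The lower bound on $M$ comes from a Slepian comparison on tree tips at depth $\tfrac12\e^{-1}\log N$; the upper bound on $M$ is a multi-scale chaining over levels $U^j\subset L_{j\kappa}$, using $|U^j|\lesssim Ne^{-j}$ together with the variance bound $\Var(\eta_v)\approx j\kappa$ to optimise $\sqrt{2(1-\g)\log N}\cdot\sqrt{\g\e^{-1}\log N}$ at $\g=1/2$.

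Your outline has a real gap on the upper-bound side. The approach you describe is essentially what \cite{CFL} attempted; as the paper notes, they obtained the sharp constant for the 2-core but \emph{could not} push the first-visit analysis through to $C_1$. The obstruction is exactly the one you flag but do not resolve: the walk on $C_1$ is not rapidly mixing, because excursions into pendant trees of depth $\Theta(\e^{-1}\log N)$ last $\Theta(\e^{-2}\log^2 N)$ steps, so the usual ``after mixing time, $\Pr(T_v>t)\approx e^{-t/\E T_v}$'' heuristic, and hence the approximate independence of the events $\{T_v>t\}$, is not available. Moreover, a single-scale union bound does not give the constant $1$: there are $\Theta(\e^2 n\,e^{-\e\ell})$ vertices at depth $\ell$, each with hitting time $\sim 4\e n\ell$, and summing $\e^2 n\,e^{-\e\ell}\exp\{-t/(4\e n\ell)\}$ over $\ell$ at $t=(1+\d)n\log^2 N$ leaves a prefactor of order $\e^{-1}$ that is not killed by $N^{-\sqrt{1+\d}+1}$ for all admissible $\e$. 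Getting the sharp upper bound genuinely requires a multi-scale argument---which is precisely what the GFF chaining supplies and what your sketch does not.

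Your lower-bound idea (take one tip per tree of depth $\ge\tfrac12\e^{-1}\log N$ and run Matthews) is morally the same as the paper's Slepian step and should yield the correct constant; but note that pendant trees are branching Poisson$(\mu)$ trees, not bare paths, so your claim $\E_u[T_{r(u)}]\le \ell^2$ needs justification for non-leaf vertices, and the Matthews lower bound requires $\min_{u\neq v}\E_uT_v$ (not $\E_\pi T_v$), so you must also control hitting times between tips in different trees.
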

Our proof is very different from the proof in \cite{CFL}. We will use the notion of a Gaussian Free Field (GFF). This was used in the breakthrough paper of Ding, Lee and Peres \cite{DLP} that describes a {\em deterministic} algorithm for approximating $C_G$ to within a constant factor. This was later refined by Ding \cite{Ding} and by Zhai \cite{Z}. It is the latter paper that we will use. In the next section, we will describe the tools needed for our proof. Then in Section \ref{prf} we will use these tools to prove Theorem \ref{th1}.
\section{Tools}
\subsection{Gaussian Free Field}

\textbf{Definition 1.}
For our purposes, given a graph $G=(V,E)$, a GFF is a {\plum random} vector $(\eta_v,v\in V)$ whose joint distribution is Gaussian
with
\begin{enumerate}[(i)]
\item $\E(\eta_v)=0$ for all $v\in V$.
\item $\eta_{v_0}=0$ for some fixed vertex $v_0\in V$.
\item $\E((\eta_v-\eta_w)^2) =R_{\eff}(v,w)$ for all $v,w\in V$.
\end{enumerate} 
Note that in particular, $\text{Var}(\eta_v) = \E (\eta_v^2) = R_{\eff}(v,v_0)$. (Here $R_{\eff}$ is the effective resistance between $v$ and $w$, when $G$ is treated as an electrical network where each edge is a resistor of resistance one. See Doyle and Snell \cite{DS} or Lewin, Peres and Wilmer \cite{LPW} for nice discussions of this notion). As its name suggests, $R_{\eff}$ is most naturally defined in terms of electrical networks. For us the following mathematical definition will suffice: for a graph $G=(V,E)$ and vertices $v,w\in V$, we use the {\em commute time identity} to define
\beq{Reffdef}{
R_{\eff}(v,w)=\frac{\t(v,w)+\t(w,v)}{2|E|},
}
where $\t(v,w)$ is the expected time for a simple random walk starting at $v$ to reach $w$.

Note that, as suggested by the electrical analog, we have
\beq{Reffdist}{
R_{\eff}(v,w)\leq \dist(v,w).
}
This is a simple consequence of Rayleigh's Monotonicity Law (delete all edges except for a shortest path from $v$ to $w$), see \cite{DS}.

In the continuous setting, the Gaussian free field generalizes Brownian motion (or the Brownian bridge) and can be seen as a model of a random surface.  In the discrete setting, the Gaussian Free Field can be seen as generalizing Brownian motion on a line to an analog of Brownian motion on the topology of the graph.  In particular, if $G$ is a path with $t$ edges, and the fixed vertex $v_0$ is an endpoint of the path, then the normals $\eta_v$ in the GFF for the path can be generated in terms of Brownian motion $W(t)$, by setting $\eta_v$ to be $W(\dist(v,v_0))$.  

The important thing for the present paper is a remarkable connection between the Gaussian Free Field on a graph and its cover time.  Let us define
$$M=\E(\max_{v\in V}\eta_v).$$
Ding, Lee and Peres \cite{DLP} proved that there are universal constants $c_1,c_2$ such that
\beq{DLPth}{
c_1|E|M^2\leq C_G\leq c_2|E|M^2.
}
Next let $R=\max_{v,w\in V}R_{\eff}(v,w)$. Zhai \cite{Z} proved the following theorem:
\begin{theorem}[Zhai]\label{Zhai-theorem}
Let $G = (V,E)$ be a finite undirected graph with a specified vertex $v_0 \in V$. There are universal positive constants $c_1, c_2$ such that if we let $\tau_{cov}$ be the first time that all the vertices in $V$ have been visited at least once for the walk on $G$ started at $v_0$, we have
\beq{Zth}{
\Pr\left(\Big|\tau_{cov}-|E|M^2\Big|\geq |E|(\sqrt{\l R}\cdot M+\l R)\right)\leq c_1e^{-c_2\l}
}
for any $\l\geq c_1$. 
\end{theorem}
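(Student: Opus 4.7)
My plan is to follow the Gaussian Free Field strategy pioneered by Ding, Lee and Peres \cite{DLP}, using the generalized second Ray--Knight isomorphism theorem (Eisenbaum--Kaspi--Marcus--Rosen--Shi) as the bridge from local times of the random walk to the GFF $(\eta_v)_v$. Consider the continuous-time simple random walk on $G$ started at $v_0$ with holding rate $\deg(v)$ at each $v$, write $L_v^s$ for local time at $v$, and let $\t(t):=\inf\{s:L_{v_0}^s=t\}$ be the inverse local time at $v_0$. The isomorphism asserts that, for an independent copy of the GFF anchored at $v_0$,
\[
\bigl(L_v^{\t(t)}+\tfrac12\eta_v^2\bigr)_{v\in V}\ \stackrel{d}{=}\ \bigl(\tfrac12(\eta_v+\sqrt{2t})^2\bigr)_{v\in V}.
\]
Since $L_v^{\t(t)}\geq 0$, the event $\{\tau_{cov}\leq \t(t)\}=\{\min_v L_v^{\t(t)}>0\}$ translates, via Eisenbaum's sign-reversal refinement of the isomorphism, into the Gaussian event that $\max_v \eta_v$ is less than $\sqrt{2t}$ up to a controlled correction arising from the expansion $(\eta_v+\sqrt{2t})^2-\eta_v^2$.

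Setting $s=\sqrt{2t}$, the previous step yields a two-sided sandwich: whether $\tau_{cov}\leq \t(s^2/2)$ is determined by whether $\max_v\eta_v<s$, up to an error $\delta(s)=O(\sqrt{R/s})$. Next, I apply the Borell--Tsirelson--Ibragimov--Sudakov inequality to $\max_v \eta_v$, which is a $1$-Lipschitz function on the Gaussian sample space of $(\eta_v)_v$ with pointwise variance $\leq\max_v R_{\eff}(v,v_0)\leq R$:
\[
\Pr\bigl(|\max_v \eta_v - M|>u\bigr)\ \leq\ 2\exp\bigl(-u^2/(2R)\bigr).
\]
Taking $u=\sqrt{\l R}$ confines $\max_v\eta_v$ to $[M-\sqrt{\l R},M+\sqrt{\l R}]$ outside an event of probability at most $2e^{-\l/2}$, so on this event $\tau_{cov}$ is sandwiched between $\t\bigl((M-\sqrt{\l R})^2/2\bigr)$ and $\t\bigl((M+\sqrt{\l R})^2/2\bigr)$.

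Finally, I would convert the inverse local time $\t(t)$ back to elapsed time. A standard renewal/commute-time computation gives $\E[\t(t)]=2|E|\,t$, with Bernstein-type fluctuations that are absorbed into the same $e^{-c\l}$ failure probability. Substituting $s=M\pm\sqrt{\l R}$ then gives
\[
\bigl|\tau_{cov}-|E|M^2\bigr|\ \leq\ |E|\bigl((M+\sqrt{\l R})^2-M^2\bigr)\ =\ |E|\bigl(2M\sqrt{\l R}+\l R\bigr),
\]
matching \eqref{Zth} after absorbing constants into $c_1,c_2$. The passage from continuous-time to discrete-time cover time contributes only lower-order corrections, since the two walks visit the same sequence of vertices and the exponential holding times concentrate sharply.

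The main obstacle is executing the isomorphism-based comparison at the sharp scale. Because the isomorphism equates only the sum $L_v^{\t(t)}+\eta_v^2/2$ in distribution (not $L$ and $\eta$ separately), reducing the covered event to a Gaussian event about $\max_v\eta_v$ requires Eisenbaum's sign-reversal coupling and careful bookkeeping of the cross terms produced when $(\eta_v+s)^2-\eta_v^2$ is linearized. Zhai's improvement over the constant-factor result of \cite{DLP} is precisely to keep this linearization tight enough that the dominant deviation scales as $M\sqrt{\l R}$ rather than $M^2$; the remaining pieces (Borell--TIS and renewal-type concentration of $\t(t)$) are standard.
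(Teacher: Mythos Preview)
The paper does not prove this statement at all: Theorem~\ref{Zhai-theorem} is quoted verbatim as a result of Zhai~\cite{Z} and is used as a black box (to derive \eqref{eq:C_G} and hence Theorem~\ref{th1} from Theorem~\ref{Mexp}). There is therefore no ``paper's own proof'' to compare your proposal against.

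That said, your sketch is broadly faithful to the actual argument in~\cite{Z}: the generalized second Ray--Knight isomorphism together with Borell--TIS concentration of $\max_v\eta_v$ (variance bounded by $R$) and concentration of the inverse local time $\tau(t)$ around $2|E|t$ are indeed the three ingredients. The one place where your outline is vague and potentially misleading is the appeal to ``Eisenbaum's sign-reversal refinement'' to pass from the event $\{\min_v L_v^{\tau(t)}>0\}$ to a Gaussian event about $\max_v\eta_v$. The isomorphism only gives a distributional identity for the \emph{sum} $L_v^{\tau(t)}+\tfrac12\eta_v^2$, and extracting two-sided bounds on $\tau_{cov}$ from this requires more than linearizing $(\eta_v+s)^2-\eta_v^2$; Zhai's key contribution is a careful coupling/monotonicity argument (controlling when the shifted field $\eta_v+s$ can vanish) that your sketch does not supply. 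Without that step the argument collapses to the constant-factor bounds of~\cite{DLP} rather than the exponential concentration \eqref{Zth}. So the skeleton is right, but the hard part is precisely the one you wave at.
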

Setting $X = \frac{\tau_{cov}}{|E|M^2}$, this gives after crude estimates
\[
|\E X - 1| \leq \E|X-1| = \int_0^\infty \Pr(|X-1| > t) dt \leq C\left(\sqrt{\frac{R}{M^2}} + \frac{R}{M^2}\right)
\]
for a universal constant $C$. Note that $R$ and $M$ do not depend on $v_0$ (for $M$, observe that for any fixed vertex $w$, $\E[\max_{v\in V}\eta_v] = \E[(\max_{v\in V}(\eta_v-\eta_w)) + \eta_w] = \E[\max_{v\in V}(\eta_v-\eta_w)]$, since the Gaussians have mean $0$, see also Remark 1.3 in \cite{Z}). After taking the maximum over $v_0$ we thus get that $C_G = \max_{v_0} \E\tau_{cov}$ satisfies
\begin{equation}\label{eq:C_G}
C_G = |E|M^2\left(1 + O\left(\sqrt{\frac{R}{M^2}} + \frac{R}{M^2}\right)\right).
\end{equation}
Now, as we will see in the next section, the number of edges in the emerging giant is given by the following theorem:
\begin{theorem}\label{numberedges} Let $G = G_{n,p}$ be as in Theorem \ref{th1}. Then
\beq{edges}{
|E(C_1)| \approx2\e n\quad w.h.p.
}
\end{theorem}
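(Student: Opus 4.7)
The plan is to use the elementary identity
\[
|E(C_1)| \;=\; |V(C_1)| \;-\; 1 \;+\; \xi(C_1),
\]
where $\xi(C_1) := |E(C_1)| - |V(C_1)| + 1$ is the cyclomatic number (excess) of the giant, and to estimate the two contributions separately via well-known structural results about the emerging giant.

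For the first contribution, I would invoke the sharp asymptotic
\[
|V(C_1)| \;\approx\; 2\e n \qquad \text{w.h.p.,}
\]
valid for $\e = o(1)$ with $\e^3 n \to \infty$. This is a classical result of {\L}uczak on the anatomy of the emerging giant. It is proved by coupling a BFS-exploration of the component containing a uniformly random vertex with a Poisson$(1+\e)$ Galton--Watson branching process (whose survival probability is $(2+o(1))\e$), and by using a second-moment computation -- or, equivalently, the concentration of the hitting time of $0$ for the associated exploration random walk -- to conclude concentration of the total size.

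For the second contribution, the key point is that iteratively pruning the degree-one vertices of $C_1$ to reach its $2$-core $C_1^{(2)}$ does not alter the cyclomatic number, so $\xi(C_1)=\xi(C_1^{(2)})$. In the same regime $\e=o(1)$, $\e^3 n \to \infty$, it is classical that the $2$-core is ``nearly $2$-regular'', with $|V(C_1^{(2)})|$ and $|E(C_1^{(2)})|$ both equal to $(2+o(1))\e^2 n$ w.h.p., so that $\xi(C_1)=\xi(C_1^{(2)})=O(\e^3 n)$ w.h.p. Since $\e\to 0$, we have $\e^3 n=\e^2\cdot\e n=o(\e n)$, so $\xi(C_1)=o(\e n)$. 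Combining the two estimates,
\[
|E(C_1)| \;=\; (2+o(1))\e n \;+\; o(\e n) \;=\; (2+o(1))\e n \qquad \text{w.h.p.,}
\]
which is the desired conclusion.

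The argument itself is essentially a one-line bookkeeping identity, and I do not foresee any serious obstacle; the only care needed is verifying that the asymptotics for $|V(C_1)|$ and for the $2$-core sizes $|V(C_1^{(2)})|,|E(C_1^{(2)})|$ may be quoted uniformly throughout the boundary regime $\e=o(1)$ with $\e^3 n\to\infty$, rather than merely for constant~$\e$. Both uniform strengthenings are already present in the literature on the emerging giant (and can also be derived directly by pushing the standard exploration/branching-process coupling to an appropriate scaling window), so the remaining work is essentially locating the precise reference.
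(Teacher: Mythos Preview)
Your argument is correct and reaches the same conclusion, but by a genuinely different route from the paper. The paper does not use the identity $|E|=|V|-1+\xi$ at all; instead it invokes the Ding--Kim--Lubetzky--Peres contiguous model $H$ (Theorem~\ref{themg}) and counts edges layer by layer: $|E(K_1)|\approx 2\e^3 n$ for the kernel, then $|E(K_2)|\approx 2\e^2 n$ after inserting $\mathrm{Geom}(1-\mu)$ paths of mean length $\approx 1/\e$, and finally $|E(H)|\approx 2\e n$ after hanging Poisson$(\mu)$ Galton--Watson trees of mean size $\approx 1/\e$ on every vertex of $K_2$. What their approach buys is that the three-step model is needed anyway for the GFF analysis in the rest of the paper, so the edge count falls out for free; what your approach buys is that it avoids introducing the model altogether and reduces everything to two off-the-shelf facts about $|V(C_1)|$ and the $2$-core. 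One small imprecision: from ``$|V(C_1^{(2)})|$ and $|E(C_1^{(2)})|$ both equal to $(2+o(1))\e^2 n$'' you can only deduce $\xi(C_1^{(2)})=o(\e^2 n)$, not $O(\e^3 n)$ --- the sharper $O(\e^3 n)$ requires the kernel-level information. This does not matter for your purposes, since $o(\e^2 n)=o(\e n)$ already suffices.
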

This follows from the work in \cite{DKLP} as we will see in Section \ref{struct}. 

Our main contribution is the following theorem:
\begin{theorem}\label{Mexp} 
Let $G = G_{n,p}$ be as in Theorem \ref{th1} and let $M$ the the expected maximum of a GFF on $G$ as defined above. Then
\beq{EG1}{
M\approx \frac{\log (\e^3n)}{(2\e)^{1/2}} \quad w.h.p.
}
\end{theorem}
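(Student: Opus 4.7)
My approach is to establish matching upper and lower bounds on $M$ using the detailed structural description of the emerging giant due to Ding, Kim, Lubetzky, and Peres \cite{DKLP}, combined with classical tools for Gaussian maxima. Set $N=\e^3n$ and fix $v_0$ to be any vertex of the kernel (the choice of $v_0$ does not affect $M$). The structural inputs I would extract from \cite{DKLP} are: $C_1$ decomposes into a \emph{kernel} (an essentially $3$-regular random multigraph on $\Theta(N)$ vertices, whose pairwise effective resistances are $O(1)$), whose edges are subdivided into \emph{paths} of geometric length with mean $1/\e$, together with (sub)critical Galton--Watson trees hanging off each 2-core vertex. W.h.p.\ the longest 2-core path has length $(1+o(1))\log N/\e$; for each $\alpha\in(0,1)$ the number of paths of length at least $\alpha\log N/\e$ is of order $N^{1-\alpha}$; and the maximum attached-tree depth is $o(\log N/\e)$.

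Using $R_{\eff}\leq\dist$ together with series/parallel rules, I would then check that the maximum variance $\sigma_{\max}^2=\max_v R_{\eff}(v,v_0)$ is attained (up to $o(1)$) at the midpoint of the longest 2-core path: such a midpoint has two disjoint routes to $v_0$, each of resistance approximately half the path length, whose parallel combination contributes the correct leading order $\log N/\e$. For the upper bound on $M$, I would use a Dudley/generic-chaining bound in the effective-resistance metric $d(u,v)=\sqrt{R_{\eff}(u,v)}$, exploiting that the 2-core is essentially a union of long paths and so has only linearly-growing covering numbers in this metric at large scales; this should give $M\leq(1+o(1))\log N/\sqrt{2\e}$. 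The cruder bound $M\leq\sigma_{\max}\sqrt{2\log|V|}$ recovers only the correct order of magnitude; chaining is needed to nail down the sharp constant $1/\sqrt{2}$.

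The lower bound is the main obstacle. My plan is to isolate a family of $\asymp N$ vertices---one near the midpoint of each sufficiently long 2-core path---whose variances are close to $\sigma_{\max}^2$ and whose mutual effective resistances are of order $\log N/\e$, so that the corresponding Gaussians are well-separated in the intrinsic metric. Sudakov minoration applied to this family, with the correlations between the Gaussians carefully controlled by appealing to the expander properties of the kernel, should then yield the matching lower bound $M\geq(1-o(1))\log N/\sqrt{2\e}$. The most delicate point is controlling those correlations precisely enough to achieve the sharp constant; this likely requires a comparison argument (e.g., Slepian or Sudakov--Fernique) between the GFF on the 2-core and a tractable Gaussian model, such as Brownian motion indexed by a branching structure matching the depth and density of the long paths.
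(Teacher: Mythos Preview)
Your structural picture contains a decisive error: you assert that the maximum attached-tree depth is $o(\log N/\e)$, but in fact it is $\Theta(\log N/\e)$. With $\approx 2N/\e$ roots on $K_2$ and each tree a Poisson$(\m)$-Galton--Watson tree with $1-\m\approx\e$, the survival probability to depth $k$ is $\Theta(\e e^{-\e k})$ (Lemma~\ref{lemdepth}), so the expected number of trees reaching depth $\g\log N/\e$ is $\Theta(N^{1-\g})$; in particular the deepest tree has depth $(1+o(1))\log N/\e$, comparable to the longest $2$-core path.

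This wrecks both halves of your plan, because the variance profile is entirely different from what you describe. A vertex at depth $d$ in a pendant tree has effective resistance $d$ to its root (the tree path is the unique route), whereas the midpoint of a $2$-core path of length $\ell$ has resistance only $\ell/4$ to the kernel (two parallel half-paths). Since the maximal tree depth and the maximal path length are both $\approx\log N/\e$, the maximum variance is attained at the tip of a deep tree, not at a path midpoint.

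For the lower bound this costs you exactly a factor of $2$. Running your own optimisation: with $N^{1-\a}$ paths of length $\geq\a\log N/\e$ and midpoint variance $\a\log N/(4\e)$, even a perfect Slepian comparison yields at best
\[
\sqrt{2(1-\a)\log N}\cdot\sqrt{\a\log N/(4\e)}=\log N\sqrt{\a(1-\a)/(2\e)}\leq \frac{\log N}{2\sqrt{2\e}}
\]
at $\a=1/2$, half of the target. The paper instead selects one vertex at depth $\g\log N/\e$ from each of the $\Theta(N^{1-\g})$ deep trees; these have variance $\g\log N/\e$ and pairwise resistance at least $2\g\log N/\e$ (the tree paths are disjoint), so Slepian against i.i.d.\ Gaussians gives $\log N\sqrt{2\g(1-\g)/\e}$, matching the target at $\g=1/2$.

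For the upper bound, any chaining must account for the $\Theta(N^{1-\g})$ tree vertices at depth $\g\log N/\e$ with variance $\g\log N/\e$; the sharp constant comes precisely from the AM--GM step $2\sqrt{\g(1-\g)}\leq 1$ applied at every such scale. The paper does this by slicing the trees into layers $U^j$ at depths $j\k$ with $\k\approx 1/\e$, bounding $|U^j|=O(Ne^{-j})$, and using \eqref{emax} layer by layer; the $2$-core itself contributes only $o(\log N/\sqrt{\e})$ and is handled by a separate, shorter chaining. Your proposed chaining on the $2$-core alone simply omits the part of the graph that carries the maximum.
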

This immediately implies Theorem \ref{th1} as follows:
\begin{proof}[Proof of Theorem \ref{th1}]
In view of \eqref{eq:C_G} obtained from Theorem \ref{Zhai-theorem}, Theorem \ref{Mexp} implies Theorem \ref{th1} if we can show that w.h.p. $R=o(M^2)$. Now, we know from \eqref{Barl}, \eqref{DLPth} and \eqref{edges} (or from Theorem~\ref{Mexp}) that w.h.p. $M=\Omega(\e^{-1/2}\log(\e^3n))$. Therefore to prove that $R=o(M^2)$ it will be sufficient to prove
\beq{smallR}{
R=O\bfrac{\log(\e^3n)}{\e}.
}
This can be verified as follows: first we observe that the effective resistance between two vertices of a graph $G$ is always bounded above by the diameter of $G$, see \eqref{Reffdist}. Second, it was proved in \cite{DKLP2} that w.h.p. the diameter of $G_{n,p}$ is asymptotically equal to $\frac{3\log(\e^3n)}{\e}$ and so \eqref{smallR} follows immediately.
\end{proof}
\subsection{Structure of the emerging giant}\label{struct}
Ding, Kim, Lubetzky and Peres \cite{DKLP} describe the following construction of a random graph, which we denote by $H$. Let $0<\m<1$ satisfy $\m e^{-\m}=(1+\e)e^{-(1+\e)}$. Let $\cN(\m,\s^2)$ denote the normal distribution with mean $\m$ and variance $\s^2$.

{\sc giantconstruction} 
\begin{enumerate}[Step 1.]
\item Let $\Lambda\sim \cN\brac{1+\e-\m,\frac{1}{\e n}}$ and assign i.i.d. variables $D_u\sim \text{Poisson}(\Lambda)$ ($u\in[n]$) to the vertices, conditioned that $\sum D_u1_{D_u\geq 3}$ is even. (While $\Lambda$ can be negative, we show in \eqref{deflambda} below that it is positive w.h.p.)\\
Let $N_k=|\set{u:D_u=k}|$ and $N_{\geq 3}=\sum_{k\geq 3}N_k$. Select a random graph $K_1$ on $N_{\geq 3}$ vertices, uniformly among all graphs with $N_k$ vertices of degree $k$ for all $k\geq 3$.
\item Replace each edge $e\in E(K_1)$ by a path $P_e$ of length $\text{Geom}(1-\m)$ to create $K_2$.  (Hereafter, $K_1$ denotes the graph from Step 1 whose vertices are the subset of vertices of $H$ consisting of these original vertices of degree $\geq 3$ and $K_2\supseteq K_1$ denotes the graph created by the end of this step.)
\item Attach an independent Poisson($\m$)-Galton-Watson tree with root $v$ to each vertex $v$ of $K_2$.
\end{enumerate}
The main result of \cite{DKLP} is the following theorem:
\begin{theorem}\label{themg}
Let $\e \to 0$ such that $\e^3n\to\infty$. For any graph property $\cA$, $\Pr(H\in\cA)\to0$ implies that $\Pr(C_1\in \cA)\to0$.
\end{theorem}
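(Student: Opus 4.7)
The plan is to construct an explicit coupling under which $C_1$ and $H$ are isomorphic with probability $1-o(1)$; this is strictly stronger than the stated one-sided contiguity. The backbone of the argument is the standard duality for the emerging giant: conditional on the vertex set of $C_1$, the induced subgraph on $V\setminus C_1$ is distributed as $G_{n',p}$ with $n'=n-|C_1|$, and in this regime $n'p$ concentrates around $\m<1$, the subcritical conjugate of $1+\e$ (defined by $\m e^{-\m}=(1+\e)e^{-(1+\e)}$). This identifies the local geometry outside the giant with a Poisson$(\m)$-Galton-Watson branching process, matching Step 3 of the construction of $H$.

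First I would pin down the 2-core $C_1^{(2)}$ by a peeling argument: iteratively remove leaves of $C_1$ until all remaining vertices have degree $\geq 2$. The trees that are stripped off are, by the duality above, independent Poisson$(\m)$-Galton-Watson trees rooted at 2-core vertices, and the distribution of neighbors in these trees is correct because each vertex of $C_1^{(2)}$ sees its non-2-core neighbors through edges landing in the (conditionally) subcritical residual graph. Next I would extract the kernel $K_1$ by smoothing each maximal path of degree-2 vertices in $C_1^{(2)}$ into a single edge; along such a path, each successive vertex independently continues the run with probability $\m$ (the chance that its remaining off-path neighborhood is tree-like and lies outside the kernel), so the path lengths are i.i.d.\ Geom$(1-\m)$, as in Step 2 of the construction of $H$.

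The subtle and main obstacle is Step 1: showing that the degree sequence $(D_u)$ on $K_1$ is well approximated by i.i.d.\ Poisson$(\Lambda)$ conditioned on $D_u\geq 3$ with $\Lambda\sim\cN(1+\e-\m,1/(\e n))$. A law of large numbers argument gives that each kernel vertex has approximately Poisson$(1+\e-\m)$ half-edges that stay in the giant, since $1+\e-\m$ is precisely the expected number of neighbors a vertex has via edges whose other endpoint survives the peeling/smoothing process. The delicate part is the Gaussian mixing parameter $\Lambda$: its variance $1/(\e n)$ reflects a CLT for the total number of half-edges incident to the giant, normalized by the kernel size $\Theta(\e^3 n)$, and in the barely-supercritical window this must be established via a martingale analysis of a joint exploration of $C_1$ and its 2-core, tracking the drift to leading order in $\e$. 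Once $\Lambda$ is fixed, a switching argument using vertex-exchangeability of $G_{n,p}$ shows that conditional on the degree sequence the kernel is uniform among simple graphs with those degrees, matching Step 1; parity of $\sum D_u\mathbf{1}_{D_u\geq 3}$ is forced by the graphicality of the realized sequence. Combining the three steps produces a coupling under which $C_1\cong H$ with probability $1-o(1)$, and any property-$\cA$ event for $C_1$ is then dominated by the corresponding event for $H$ up to the $o(1)$ coupling failure, giving the theorem.
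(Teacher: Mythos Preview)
The paper does not prove this theorem at all; it is quoted verbatim as ``the main result of \cite{DKLP}'' and used as a black box throughout. So there is no in-paper proof to compare against beyond the citation.

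Your three-layer plan (kernel, geometric paths, pendant trees) is the right skeleton and matches what \cite{DKLP} actually do, but the mechanism you invoke for the Poisson$(\mu)$ trees is misapplied. The pendant trees stripped off to reach $C_1^{(2)}$ lie \emph{inside} $C_1$, not in $V\setminus C_1$; the duality ``conditional on $V(C_1)$, the induced graph on the complement is $G_{n',p}$'' therefore says nothing about them. The correct source of the Poisson$(\mu)$ law is the branching-process duality: a Poisson$(1+\e)$ Galton--Watson tree \emph{conditioned to go extinct} is exactly a Poisson$(\mu)$ Galton--Watson tree (since the dual mean is $(1+\e)q=\mu$ where $q$ is the extinction probability, and $\mu e^{-\mu}=(1+\e)e^{-(1+\e)}$). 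The subtrees hanging off the 2-core are precisely the extinct pieces of the local exploration from a 2-core vertex, which is why they are Poisson$(\mu)$. Your Step~2 justification inherits the same confusion: the Geom$(1-\mu)$ path lengths come from the event that, at each successive 2-core vertex, all excess offspring spawn only extinct subtrees --- again governed by $\mu$ via extinction, not by edges landing in the complement of the giant.

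Beyond this misattribution, the genuinely hard content of \cite{DKLP} is converting these local Poisson heuristics (exact only in an infinite idealized exploration) into contiguity for the finite graph: establishing the CLT that produces the Gaussian mixing parameter $\Lambda$, and showing the kernel is uniform given its degree sequence. You flag these as the ``subtle and main obstacle'' but do not supply the arguments, so what you have is a plausible outline with one incorrect justification, not a proof.
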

We will work with this construction for the remainder of the manuscript.
For our application of the Gaussian free field, we make the convenient choice that $v_0$ is a vertex in $K_1$. 

\begin{proof}[Proof of Theorem \ref{numberedges}.]
Let $H$ be the graph constructed in Steps 1-3. In view of Theorem \ref{themg}, in order to show $|E(C_1)| \approx 2\e n$, we show $|E(H)| \approx 2\e n$. We observe that
\beq{valmu}{
1-\m-\e\in [0,\e^2].
}
Recall from Step 1 that $\Lambda\sim \cN\brac{1+\e-\m,\frac{1}{\e n}}$. Applying the Chebyshev inequality we see that for any $\th>0$, we have
$$\Pr\brac{|\Lambda-\E(\Lambda)|\geq \th}\leq \frac{1}{\th^2\e n}.$$
Putting $\th=n^{-1/3}$, we see that $\th^2\e n = \e n^{1/3} \to \infty$, so
\beq{deflambda}{
\Lambda = \E \Lambda + O(n^{-1/3}) = 2\e+O(n^{-1/3}+\e^2),\qquad w.h.p.
}
The restriction $\sum D_u1_{D_u\geq 3}$ is even will be satisfied with constant probability and then we see that w.h.p. 
\beq{sizeK1}{
N_{\geq 3}\approx \frac{4}{3}\e^3n \text{ and almost all vertices of $K_1$ have degree three.}
}
Therefore, w.h.p.,
\beq{EK1}{
 |E(K_1)| \approx \frac{3}{2}\frac{4}{3}\e^3n = 2\e^3n
}
The expected length of each path constructed by Step 2 is asymptotically equal to $1/(1-\m)\approx 1/\e$. The path lengths are independent with geometric distributions (which have exponential tails) and so their sum is concentrated around their mean (by virtue of, e.g. Bernstein's inequality) which is asymptotically equal to $|E(K_1)|\frac{1}{\e}\approx 2\e^2n$. Thus, w.h.p., $|E(K_2)| \approx 2\e^2n$. 
Note also that in $K_2$, w.h.p.,~there is no path longer than $\frac{2}{\e}\log N_{\geq 3}$.

Furthermore, the expected size of each tree in Step 3 is also asymptotically equal to $1/\e$. These trees are independently constructed whose sizes also have exponentially decaying tails and so the total number of edges is concentrated around its mean which is asymptotically equal to $|E(K_2)|\frac{1}{\e} \approx 2\e n$. Thus, w.h.p. $|E(H)| \approx 2\e n$, which proves Theorem \ref{numberedges}.
\end{proof}
Let 
\[
N=\e^3n \text{ and let $\k$ denote the smallest power of 2 which is at least $1/\e$}.
\]

\begin{lemma}\label{longpath}
W.h.p. $|P_e|\leq \frac{2\log N}{\e}$ for all paths $P_e$ created in Step 2.
\end{lemma}
\begin{proof}
\[
\Pr\brac{|P_e|\geq \frac{2\log N}{\e}}\leq (1-\e(1-\e))^{2\log N/\e}\leq N^{-(2-o(1))}.
\]
The result now follows from \eqref{EK1} and the Markov inequality.
\end{proof}
\subsubsection{Galton-Watson Trees}
A key parameter for us will be the probability that a Galton-Watson tree with Poisson($\mu$) offspring distribution survives for at least $k$ levels.  The following Lemma was proved by Ding, Kim, Lubetzky and Peres (see Lemma 4.2 in \cite{DKLP2}).
\begin{lemma}\label{lemdepth}
Let $0<\m<1$ and $\e > 0$ satisfy $\m e^{-\m}=(1+\e)e^{-(1+\e)}$. Let $T$ be a Poisson($\m$)-Galton-Watson tree. Let $L_k$ denote the $k$-th level of $T$. Then there exist absolute constants $c_1<c_2$ such that for any $k\geq 1/\e$ we have
  \[
 c_1(\e\exp\set{-k(\e+c_1\e^2)})\leq  \Pr\left(L_k\neq\emptyset\right)\leq c_2(\e\exp\set{-k(\e-c_2\e^2)}).
  \]
\end{lemma}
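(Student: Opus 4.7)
My plan is to analyze the recursion satisfied by $q_k := \Pr(L_k\neq\emptyset)$. Since a Poisson($\m$)-Galton-Watson tree is non-empty at level $k+1$ iff some subtree rooted at a child of the root is non-empty at level $k$, and the number of children is Poisson($\m$), we have the recursion $q_{k+1}=1-\E[(1-q_k)^{\text{Poisson}(\m)}] = 1-e^{-\m q_k}$ with $q_0=1$. Expanding the exponential gives
\[
q_{k+1} = \m q_k - \tfrac{1}{2}\m^2 q_k^2 + O(q_k^3),
\]
so the process is essentially a contraction by a factor of $\m$ with a small quadratic correction.

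To capture the $\e$ prefactor, I would pass to the reciprocal $h_k := 1/q_k$, for which the recursion becomes
\[
h_{k+1} \;=\; \frac{h_k}{\m}\cdot\frac{1}{1-\tfrac{1}{2}\m q_k + O(q_k^2)} \;=\; \frac{h_k}{\m}+\frac{1}{2}+O(q_k).
\]
Multiplying by $\m^{k+1}$ telescopes: $h_{k+1}\m^{k+1}-h_k\m^k = \tfrac{1}{2}\m^{k+1}+O(\m^{k+1}q_k)$. Since $q_k\le \m^k$ (a trivial upper bound from the linearized recursion), the error terms sum to $O(1)$, and for $k\ge 1/\e$ I obtain
\[
h_k\m^k \;=\; 1+\frac{\m}{2(1-\m)}+O(1) \;\asymp\; \frac{1}{1-\m},
\]
with matching two-sided constants. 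Inverting gives $q_k\asymp (1-\m)\m^k$, which, via \eqref{valmu}, is of order $\e\,\m^k$.

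It remains to convert $\m^k$ into the stated exponential. Writing $\m=1-\d$ and equating $\m e^{-\m}=(1+\e)e^{-(1+\e)}$, a short Taylor expansion yields $\d = \e+O(\e^2)$ and hence $\log\m = -\e + O(\e^2)$. Therefore $\m^k = \exp\{-k\e+O(k\e^2)\}$, which absorbed into the multiplicative $O(1)$ constants gives
\[
c_1\,\e\exp\{-k(\e+c_1\e^2)\} \;\le\; q_k \;\le\; c_2\,\e\exp\{-k(\e-c_2\e^2)\}
\]
for suitable absolute constants, valid uniformly in $k\ge 1/\e$.

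The main subtlety is getting the quadratic-in-$\e$ slack in the exponent correct: one must track the error $O(q_k)$ in the recursion for $h_k$ carefully against the accumulation of $k$ steps, and make sure the Taylor expansion of $\log\m$ in terms of $\e$ yields precisely an $O(\e^2)$ deviation from $-\e$. Both points are routine provided $k\ge 1/\e$, since then $\m^k\le e^{-1}$ so $q_k$ is bounded away from the trivial regime and the linearization is justified; below $k\approx 1/\e$ the lemma is not claimed. No other obstacles arise.
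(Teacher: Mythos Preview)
Your approach is correct, but note that the paper does not actually give a proof of this lemma: it simply cites Lemma~4.2 of \cite{DKLP2}. What you have written is therefore not a comparison target but a self-contained argument, and it is essentially the standard one (and, as far as I know, the one in \cite{DKLP2}): analyze the recursion $q_{k+1}=1-e^{-\mu q_k}$, pass to $h_k=1/q_k$, and telescope $h_k\mu^k$.

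One small point to clean up: with only the crude bound $q_j\le\mu^j$, the accumulated error $\sum_{j<k}\mu^{j+1}q_j$ is $O\!\left(\frac{1}{1-\mu^2}\right)=O(1/\e)$, not $O(1)$ as you wrote. This does not damage the conclusion, since the main term $\frac{\mu}{2(1-\mu)}$ is also of order $1/\e$ and you only need $h_k\mu^k\asymp 1/\e$. If you want the error to genuinely be $O(1)$, do the two directions in order: first use the one-sided inequality $h_{k+1}\ge h_k/\mu+\tfrac12$ (which follows because the higher-order corrections in the expansion of $x/(1-e^{-x})$ are nonnegative) to get $q_k\le C\e\mu^k$, and then feed this improved bound back into the error sum for the other direction. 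Everything else---the uniform validity of the expansion $h_{k+1}=h_k/\mu+\tfrac12+O(q_k)$ for $q_k\in(0,1]$, and the conversion $\log\mu=-\e+O(\e^2)$ giving $\mu^k=\exp\{-k\e+O(k\e^2)\}$---is fine.
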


Their proof also easily gives the following result.
\begin{lemma}\label{lemsmalldepth}
For $k<1/\e$ we have
  \[
  \Pr\left(L_k\neq\emptyset\right)<\frac{10}{k}.
  \]
\end{lemma}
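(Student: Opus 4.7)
The plan is to exploit the standard generating-function recursion for Galton–Watson survival probabilities and reduce the problem to an elementary one-variable inequality. Writing $p_k := \Pr(L_k\neq\emptyset)$ and conditioning on the number $X\sim\mathrm{Poisson}(\mu)$ of children of the root, each of the $X$ children is itself the root of an independent Poisson($\mu$)-Galton–Watson tree, and fails to have a descendant at level $k$ with probability $1-p_k$. Therefore
\[
p_{k+1} \;=\; 1 - \E\!\left[(1-p_k)^X\right] \;=\; 1 - e^{-\mu p_k}.
\]

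The next step is to prove the elementary inequality
\[
\frac{1}{1-e^{-x}} \;\geq\; \frac{1}{x} + \frac{1}{2}, \qquad x > 0,
\]
which I would verify by considering $g(x)=x-(1+x/2)(1-e^{-x})$: direct computation gives $g(0)=g'(0)=0$ and $g''(x)=(x/2)e^{-x}\ge 0$, so $g\ge 0$ on $[0,\infty)$, which is equivalent to the claimed bound.

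Applying this inequality with $x=\mu p_k$, and using that $\mu\le 1$ (since $\mu$ is the conjugate parameter satisfying $\mu e^{-\mu}=(1+\e)e^{-(1+\e)}$ with $1+\e>1$, forcing $\mu<1$), I get
\[
\frac{1}{p_{k+1}} \;=\; \frac{1}{1-e^{-\mu p_k}} \;\geq\; \frac{1}{\mu p_k} + \frac{1}{2} \;\geq\; \frac{1}{p_k} + \frac{1}{2}.
\]
Iterating from the trivial bound $p_1\le 1$ (so $1/p_1\ge 1$) gives $1/p_k \ge 1 + (k-1)/2 = (k+1)/2$, i.e.\ $p_k \le 2/(k+1)$. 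Since $2/(k+1)\le 10/k$ for every $k\ge 1$, the desired bound follows.

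The argument is entirely uniform in $\e$ (it only uses $\mu\le 1$), so in fact the bound $10/k$ holds for all $k\ge 1$; the restriction $k<1/\e$ in the statement simply marks the regime where the Lemma~\ref{lemdepth} bound has not yet kicked in to give faster (exponential) decay. There is no real obstacle here—the only nontrivial piece is the convex/Bernoulli-expansion inequality $\tfrac{1}{1-e^{-x}}\ge \tfrac{1}{x}+\tfrac{1}{2}$, and the constant $10$ leaves enormous slack, so no careful tracking of lower-order terms is needed.
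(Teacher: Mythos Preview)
Your argument is correct and self-contained. The recursion $p_{k+1}=1-e^{-\mu p_k}$ is standard, the inequality $\tfrac{1}{1-e^{-x}}\ge \tfrac{1}{x}+\tfrac{1}{2}$ is verified exactly as you indicate (your computation of $g,g',g''$ checks out), and since $\mu<1$ the telescoping $\tfrac{1}{p_{k+1}}\ge \tfrac{1}{p_k}+\tfrac{1}{2}$ yields $p_k\le \tfrac{2}{k+1}<\tfrac{10}{k}$ for all $k\ge 1$.

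The paper does not actually present a proof of this lemma; it only remarks that the proof of Lemma~\ref{lemdepth} (from \cite{DKLP2}) ``also easily gives'' the result. Your approach is presumably in the same spirit---the survival recursion for a Galton--Watson tree is the natural tool---but you have supplied a complete argument rather than a pointer, and in fact obtained the sharper uniform bound $p_k\le 2/(k+1)$ valid for all $k\ge 1$ and all $\mu\le 1$, not just in the range $k<1/\e$. One very minor tightening: you could start the iteration at $p_0=1$ instead of $p_1\le 1$, giving $p_k\le 2/(k+2)$, but this makes no difference here given the slack in the constant~$10$.
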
 
We shall need the following result about trees attached in Step 3. Here and throughout the remainder of the paper,
\[
N = \e^3n.
\]
\begin{lemma}\label{xlem1}\
Consider the construction of the graph $H$ from Steps 1-3. Let $0 < \gamma < 1$. Let $\mathcal{T}$ be the set of trees attached in Step 3 of {\sc giantconstruction}. Then, w.h.p. (referring to the entire construction, not just Step 3), we have:
\begin{enumerate}[(a)]
\item
\beq{BinTrees}{
\text{There are between $\frac12c_1N^{1-\g+O(\e)}$ and $2c_2N^{1-\g+O(\e)}$}\text{ trees in $\mathcal{T}$ of depth at least }\g\e^{-1}\log N.
}
\item 
\beq{notrees}{
\text{There are no trees in $\mathcal{T}$ of depth exceeding }\frac{2\log N}{\e}.
}
In fact the probability of the event in \eqref{notrees} is $1-O(N^{-(1-o(1))})$.
\end{enumerate}
Here $c_1, c_2 > 0$ are the universal constants from Lemma \ref{lemdepth}.
\end{lemma}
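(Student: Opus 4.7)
The plan is to reduce both parts to straightforward first-moment estimates using Lemma~\ref{lemdepth}, exploiting that, conditional on the skeleton $K_2$, the trees of $\mathcal{T}$ are i.i.d.\ Poisson$(\m)$-Galton-Watson trees indexed by the vertices of $K_2$. So the first ingredient I need is the size of the tree pool. From the proof of Theorem~\ref{numberedges} I already have $|E(K_1)| \approx 2\e^3 n = 2N$ w.h.p., and Step~2 replaces each edge by an independent $\mathrm{Geom}(1-\m)$-length path with mean $(1+o(1))/\e$. A Bernstein-type concentration inequality applied to this sum of geometric variables gives $|V(K_2)| = (2+o(1))N/\e$ w.h.p.\ (the $|V(K_1)| \approx \frac{4}{3} N$ contribution is negligible compared to $2N/\e$).

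For part~(a), I would set $k = \g\e^{-1}\log N$. Since $\g > 0$ is fixed and $\e \to 0$, $k \geq 1/\e$ for $n$ large, so Lemma~\ref{lemdepth} applies; using $k\e = \g\log N$ and $k\e^2 = \g\e\log N$ it gives
\[
c_1\e\, N^{-\g - c_1\g\e} \;\leq\; \Pr(\text{depth} \geq k) \;\leq\; c_2\e\, N^{-\g + c_2\g\e}.
\]
Let $X_k$ count the trees of $\mathcal{T}$ of depth $\geq k$. Conditioning on $K_2$, the indicators are independent Bernoullis, so $\E[X_k \mid K_2]$ lies in the interval $[(1+o(1))\cdot 2c_1 N^{1-\g+O(\e)},\; (1+o(1))\cdot 2c_2 N^{1-\g+O(\e)}]$. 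Because $1-\g>0$ and the $O(\e)$ exponent is eventually smaller than $(1-\g)/2$, this conditional mean tends to infinity, so a Chernoff bound on the i.i.d.\ indicators gives $X_k = (1+o(1))\E[X_k \mid K_2]$ w.h.p., and absorbing the $1+o(1)$ factor into the $O(\e)$ exponent yields the claimed bounds.

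For part~(b), taking $k = 2\e^{-1}\log N$ in the upper half of Lemma~\ref{lemdepth} gives $\Pr(\text{depth} \geq k) \leq c_2\e\, N^{-2+O(\e)}$, so the expected number of such trees is at most $2 c_2 N^{-1 + O(\e)} = o(1)$, and Markov's inequality closes the argument. The only step requiring any real care is the concentration in~(a); this is routine once the conditional mean is shown to tend to infinity, which is exactly the role of the hypothesis $\g < 1$. The remaining work is bookkeeping, combining Lemma~\ref{lemdepth} with the Step~1--2 structural estimates already developed for Theorem~\ref{numberedges}.
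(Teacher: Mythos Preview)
Your proposal is correct and follows essentially the same approach as the paper: condition on $K_2$, use Lemma~\ref{lemdepth} to bound the survival probability, recognize the tree-depth count as a binomial, apply Chernoff for (a) (after noting the mean $\to\infty$ since $\g<1$) and Markov for (b). The only cosmetic difference is that the paper quotes $|V(K_2)|$ directly from the earlier structural estimates rather than rederiving it, and writes the $O(\e)$ exponent slightly less explicitly.
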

\begin{proof}
(a) Let $p_\g$ denote $\Pr(L_k\neq 0)$ for $k_\g=\lfloor\g\e^{-1}\log N\rfloor$,  $\g > 0$. Conditioning on the results of Step 1 and Step 2, the number $\n_\g$ of trees created in Step 3 of depth at least $k$ is a binomial with number of trials $|V(K_2)|$ and probability of success $p_\g$. Recall $|V(K_2)| \approx (1+o(1))\frac{4N}{\e}$. It follows from Lemma \ref{lemdepth} that 
\[(1+o(1))\frac{4N}{3}\cdot \frac{1}{\e}\cdot c_1\e\exp\set{-(\g+O(\e))\log N)}=\frac{4c_1}{3}N^{1-\g+O(\e)}\leq  \E(\n_\g)\leq \frac{4c_2}{3}N^{1-\g+O(\e)}.
\] 
Since $1 - \g > 0$ and $\e \to 0$, note that eventually $1-\g + O(\e) > \delta_0$ for some positive universal constant $\delta_0$, so $N^{1-\g+O(\e)} \to \infty$.

Thus conditional on the results of Step 1 and Step 2, $\n_\g$ is distributed as a binomial with mean going to infinity and so we have that if $0<\g<1$ then the Chernoff bounds imply \eqref{BinTrees}.

(b) It follows from Lemma \ref{lemdepth} that the probability that any fixed tree has depth at least $2\e^{-1}\log N$ is $O(\e N^{-2-o(1)})$. There are w.h.p. $O(\e^2n)$ trees and so the expected number of trees with this or greater depth is $O(\e^2n\times \e N^{-2-o(1)})=O(N^{-1-o(1)})$. The result now follows from the Markov inequality.
\end{proof}
\subsection{Normal Properties}\label{normal}
In this section we describe several properties of the normal distribution that we will use in our proof.

First suppose that $g_1,g_2,\ldots,g_s$ are independent copies of $\cN(0,1)$. Then if $G_s=\max_{i=1,\ldots,s} g_i$,
\beq{maxnorm}{
\E\brac{G_s}= \sqrt{2\log s} - \frac{\log\log s + \log(4\pi) - 2\gamma}{\sqrt{8\log s}} + O\bfrac{1}{\log s}
}
where $\gamma = 0.577\ldots$ is the Euler–-Mascheroni constant. For a proof see Cram\'er \cite{Cram}. 

Next suppose that $(X_i)_{1 \leq i \leq s}$ and $(Y_i)_{1 \leq i \leq s}$ are two centered Gaussian vectors in $\mathbb{R}^s$ such that $\E(X_i-X_j)^2 \leq \E(Y_i-Y_j)^2$ for all $1\leq i,j\leq s$. Then,
\beq{maxcomp}{
\E(\max\set{X_i:i=1,2,\ldots,s})\leq \E(\max\set{Y_i:i=1,2,\ldots,s})
}
(sometimes refered to as Slepian's lemma). See Fernique \cite{Fern} (Theorem 2.1.2 and Corollary 2.1.3).  Finally we have that if $(X_i)_{1\leq i\leq s}$ is a centered Gaussian vector and $\sigma^2=\max_i \Var(X_i)$, then 
\begin{equation}\label{emax}
\E(\max_{1\leq i\leq s} X_i)\leq \sigma\sqrt{2\log s}.
\end{equation}
This can be found, for example, in the appendix of the book by Chatterjee \cite{Chatterjee}; it follows from a simple union bound.  Nevertheless, repeated carefully chosen applications of \eqref{emax} will suffice to prove our upper bound on $M$.  (Importantly, observe by comparison with \eqref{maxnorm} that independent normals are asymptotically the worst case for the expected maximum.)

We also have
\beq{concnorm}{
\Pr(|\max_{1\leq i\leq s} X_i-\E(\max_{1\leq i\leq s} X_i)|>t)\leq 2e^{-t^2/2\s^2}.
}
See for example Ledoux \cite{Led}.

\subsection{First Visit Time Lemma}
In this section we give a lemma that the first author has used (along with Colin Cooper) many times in the study of the cover time of various models of random graphs. Let $G$ denote a fixed connected graph, and let $u$ be some arbitrary vertex from which a walk $\cW_{u}$  is started. Let $\cW_{u}(t)$ be the vertex reached at step $t$ and let $P_u^{(t)}(x)=\Pr(\cW_{u}(t)=x)$. In the following lemma, $\om=\om(n)$ is an arbitrary function that tends to $\infty$ with $n$ and $T=T_{mix}$ is a mixing time in the sense that for $t\geq T_{mix}$
\begin{equation}\label{mix0}
\max_{u,x\in V}\card{\frac{P_{u}^{(t)}(x)-\pi_x}{\pi_x}} \leq \frac{1}{\om}
\end{equation}
Next, considering the  walk $\cW_v$, starting at $v$, let $r_t=\Pr(\cW_v(t)=v)$ be the probability  that this  walk returns to $v$ at step $t = 0,1,...$. 

For $t\geq 0$, let $\cA_t(v)$ be the event that $\cW_u$ does not visit $v$ in steps $T,T+1,\ldots,t$. The vertex $u$ will have to be implicit in this definition. Let $\p_v$ be the steady state probability of vertex $v$ and 
\beq{Rv}{
R_v=\sum_{t=0}^Tr_t. 
}
\begin{lemma}\label{FVL}
Suppose that 
\beq{Tpi}{
T\pi_v=o(1).
}
Then for all $t\geq T$,
\begin{equation}
\label{frat}
\Pr(\cA_t(v))=\exp\set{-\frac{\p_v(1+O(T\p_v))t}{R_v}}+o(Te^{-c\l t/T}),
\end{equation}
for some absolute constant $c>0$.
\end{lemma}
In the lemma as used by Cooper and Frieze \cite{CF1} -- \cite{CFL}, there was a technical condition that has been removed by Manzo, Quattropani and Scoppola \cite{MQS} and we have taken advantage of this improvement to the lemma.
\subsection{Effective resistance on $K_2$}
Recall that the emerging giant can be modeled as a collection of independent Poisson Galton Watson trees attached to $K_2$.  Our proof will depend on a bound on the effective resistance of $K_2$ and then show that this bound suffices to analyze the effective resistance within the Galton Watson trees.  Recall that we think of the graph as an electrical network where each edge is a resistor of resistance one. 

There are several steps to the analysis and we give an outline here. The main result of the section is Lemma \ref{maxReff}.
\begin{enumerate}[(a)]
\item At the top level we bound effective resistance between $v,v_0\in V(K_2)$ using the commute time identity, \eqref{Reffdef}.
\item We observe that a random walk on $K_2$ is rapidly mixing and so bounding commute times reduces to bounding the expected time to visit $v_0$ from the steady state using the First Visit Lemma.  We transform $K_2$ into a related graph $\wK_{2}$ to ensure that \eqref{Tpi} holds, and such that a bound on resistance for $\wK_2$ yields a bound on resistance in $K_2$.
\item To apply Lemma \ref{FVL} we need to bound $R_v$, the expected number of returns to a vertex $v$ within the mixing time $T$. Almost all vertices of $K_2,\wK_2$ are far from short cycles and so their local neighborhoods are trees. We prune these trees so that they induce binary trees in $K_1$. This just simplifies some calculations. Pruning increases $R_v$ and effective resistances and thus it suffices to bound $R_v$ on these pruned trees.
\item Having control of the $R_v$ allows Lemma \ref{FVL} to control commute times. When we apply this lemma in Section \ref{UpperBound}, we find some minor correlation problem. This will be handled with the use of the edge-deletion graphs $\wK_{2,e}$ defined below.
\end{enumerate}

\paragraph{Transforming $K_2$}  Let $\ell_1=\rdup{\k\log N/\log\log N}$. We replace each such path of length $\ell$ in $K_2$ by one of length $\rdup{\ell/\ell_1}\ell_1$. Rayleigh's Law (\cite{DS}, \cite{LPW}) states that increasing the resistance of any edge increases all effective resistances. Placing a vertex in the middle of an edge has the same effect as that of increasing the resistance of that edge. This implies that all resistances between vertices are increased by this change of path length. Now every path has a length which is a multiple of $\ell_1$ and so if we replace paths, currently of length $k\ell_1$ by paths of length $k$, then we change all resistances by the same factor $\ell_1$.  We let $\wK_{2}=(\wV,\wE)$ denote the graph obtained in the above manner and let $\wR_{\eff}$ denote effective resistance in $\wK_{2}$. 

For $e\in E(K_1)$ we let $R_{\eff,e}$ denote effective resistance in $K_2-E(P_e)$. In addition, for each $e\in E(K_1)$ we shorten paths $P_f,f\neq e$ in $K_2-E(P_e)$. The graph obtained is $\wh K_{2,e}=(\wV,\wE_e)$. Let $\wR_{\eff,e}\geq \wR_{\eff}$ denote effective resistance in $\wh K_{2,e}$.
\begin{Remark}\label{resrem}
From our construction, we see that $\wR_{\eff,e}$ is independent of the length of $P_e$. The usefulness of this construct will become apparant when we estimate the size of the sets $U^{i,j,k}$ in Section \ref{321}.
\end{Remark}
Suppose next that we arbitrarily orient the induced paths $P_e,e\in E(K_1)$ from $h_e$ to $t_e$ where $e=\set{h_e,t_e}$. For $v\in V(K_2)\setminus V(K_1)$, we let $e_1(v)$ denote the edge of $K_1$ whose division includes $v$.  
We note that
\beq{RR}{
  R_{\eff}(v,v_0)\leq R_{\eff,e_1(v)}(t_{e_1(v)},v_0)\leq \ell_1\wR_{\eff,e_1(v)}(t_{e_1(v)},v_0)\text{ for all }v\in V(K_2)\setminus V(K_1).
}
For $k\geq 1$ we let 
\begin{align*}
\wh A_k&=\set{e\in E(K_1): \:\wR_{\eff,e}(t_e,v_0)\geq \frac{\k k}{\ell_1}}.
\end{align*}
Most vertices in $K_1$ have tree-like neighborhoods. We will define the notion of a tree-like vertex formally below. Suffice it to say at the moment that w.h.p. there are at most $\log^{100}N$ vertices that are not tree-like.
\begin{lemma}\label{maxReff} 
If $t_e\in V(K_1)$ is tree-like, then
\beq{kdef}{
\Pr(e\in \wh A_k|K_1)\le e^{-(2-o(1))\k k},\quad\frac{\ell_1}\k\leq k\leq 2\log N.
}
Here we are conditioning on the output of Step 1 in \textsc{giantconstruction}; the probability space is just over the randomness in Step 2.
\end{lemma}
\begin{proof}
We use the commute time identity \eqref{Reffdef} (\cite{DS}, \cite{LPW}) for a random walk $\chW_{e}$ on the graph $\wK_{2,e}$, to write for $v\in e\in E(K_1)$,
\beq{stime}{
2\wR_{\eff,e}(v,v_0)|\wE_e|=\t(v,v_0)+\t(v_0,v),
}
where $\t(v,w)$ is the expected time for $\chW_e$ to reach $w$ when started at $v$.

The proof of this lemma is unfortunately quite long. We break it up into a sequence of claims, that we will verify subsequently. In what follows $v\in V(K_1)$ will be fixed and $e$ will be a fixed edge of $K_1$ that contains $v$.
\begin{claim}\label{cl1a}
W.h.p., the mixing time $\wT_{mix}$ of $\chW_{e}$ is $O((\log\log N)^2\log N)$, assuming we take $\om=N$ in \eqref{mix0}.
\end{claim}
For vertices $v,w\in V(K_1)$ we bound $\t(v,w)$ by $\wT_{mix}$ plus the expected time to reach $w$ from the steady state of $\chW_e$.
\begin{claim}\label{cl2}
The expected time for $\chW_e$ to reach vertex $v$ from the steady state is $O(R_v/\p_v)$, where $R_v$ is as defined in \eqref{Rv}.
\end{claim}
Fix $e\in E(K_1)$. For a vertex $v\in V(K_1)$ we let let ${N}_v$ (the {\em neighborhood}) be the subgraph of $K_1$ induced by the set of vertices on paths of length at most $L=1000\log\log N$ in $K_1-e$. Then let $\wh{N}_v$ be the subgraph of $\wK_2-e$ that is obtained from ${N}_v$  through the execution of Step 2 and the subsequent shortening of paths that creates $\wK_2$. 

We say that $v\in V(K_1)$ is {\em tree-like} if $N_v$ (and hence $\wh{N}_v$) induces a tree. 
\begin{claim}\label{cl2a}
W.h.p.,
\begin{enumerate}[(a)]
\item For all $v$, $N_v$ contains at most one cycle.
\item The number of non-tree-like vertices is at most $\log^{100}N$.
\end{enumerate}
\end{claim}
In view of this claim, we will mainly focus on tree-like vertices and deal with the non-tree-like vertices fairly crudely. Let $T_v$ denote the tree induced by $N_v$ and let $\wh{T}_v$ denote the tree induced by $\wh{N}_v$. Let $\wh B_v=B_v$ (the {\em boundary}) denote the leaves of $\wh T_v$ (equivalently, the leaves of $T_v$).
\begin{claim}\label{cl3}
If $w\in \wh{B}_v$ then the expected number of visits to $v$ from $w$ in $\wh K_2$, in time $\wT_{mix}$, is $o(1)$.
\end{claim}
Thus, if we make $\wh{B}_v$ into absorbing states for the walk $\chW_e$ then $R_v$ is the expected number of returns before absorption, plus $o(1)$. So let $\tR_v$ be the expected number of visits to $v$ before the walk is absorbed into $\wh{B}_v$. Thus $R_v\leq \tR_v+o(1)$. Next let $p_{esc}(v)$ be the {\em escape probability} i.e. the probability that a random walk started at $v$ doesn't return to $v$, before being absorbed. Then 
\beq{Doyle}{
\tR_v=\frac{1}{p_{esc}(v)}\text{ and }p_{esc}(v)=\frac{1}{D_v\tR_{\eff}(v,\wh{B}_v)}.
}
Recall that $D_v$ denotes the degree of vertex $v$. For a proof of the second 
equation in \eqref{Doyle}, see Doyle and Snell \cite{DS}, Section 1.3.4.

We now prune $T_v$: moving level by level from the neighbors of the root $v$, we prune $T_v$ so that we obtain a tree of depth $L$ in which every vertex other than the root or the leaves has degree three. It is possible that the root $v$ already has degree two. Remember that we have deleted one edge $e$, incident to $v$. We denote the pruned tree by $\tilde T_v$. Rayleigh's principle and equation \eqref{Doyle} show that the pruning decreases the escape probability and increases the expected number of returns which is now denoted $\tilde R_v$. (Note that the pruning can only reduce the expected number of visits in Claim \ref{cl3}.)  Let $T^*_v$ be the subtree of $\wh T_v$ corresponding to $\tilde T_v$.

An edge $f\in E(K_1)$ gives rise to a path $P_f$ in $\wK_2$ and let $\psi(f)=\wh \ell(P_1)-1$, where $\wh \ell(\cdot)$ denotes $\lceil\ell(\cdot)/\ell_1 \rceil$.  Note: our definition of $\ell_1$ means that w.h.p. almost all of the paths $P_f$ in $\wh K_2$ consist of a single edge and for these $\psi=0$. Also let $\psi(v)=\sum_{f\in E(T^*_v)}\psi(f)$. Let $W_s=\set{v\in V(K_1):\psi(v)\leq s\k}$.
\begin{claim}\label{cl4}
W.h.p., if $v\in V(K_1)$ then
\begin{enumerate}[(a)]
\item $\Pr(v\notin W_s)\leq  \exp\set{-\frac{s\k(\k\e(1-\e)\log N-1000(\log\log N)^2)}{\log\log N}}$.
\item If $v\in W_s$ and $e\in E(K_1)$ and $t_e=v$ then 
\[
\wR_{\eff,e}(t_e,\wh B_{t_e})\leq \begin{cases}\frac12&s=0.\\\frac{s\k}{4}+\frac12&s\geq 1.\end{cases}
\]
\end{enumerate}
\end{claim}
In summary, if $\wR_{\eff,e}(t_e,v_0)>k\k/\ell_1$ then $t_e\notin W_s$ where $s\k/4+1/2=k\k/\ell_1\geq 1$. Therefore, for $k$ as in \eqref{kdef},
\mults{
\Pr(e\in \wh A_k\mid K_1)\leq \Pr(v\notin W_s)\leq \exp\set{-\frac{(4k\k-2\ell_1)((1-\e)\log N-1000(\log\log N)^2)}{\ell_1\log\log N}}\\
 =e^{-(2-o(1))\k k}.
}
This would complete the proof of Lemma \ref{maxReff}.  We must now substantiate our claims.
\paragraph{Proof of Claim \ref{cl1a}}
 For a graph $G=(V,E)$, let $e_G(S)$ denote the number of edges contained in the set $S\subseteq V$ and $e_G(S:\bar S)$ be the number of edges with exactly one end in $S$. For a graph $G$ and $S\subseteq V$ let $\F_G(S)=\frac{e_G(S:\bar{S})}{D(S)}$ where $D(S)$ is the sum of degrees of vertices in $S$. The {\em conductance} $\F_G$ of $G$ is equal to $\min_{D(S)\leq |E|}\F_G(S)$. It is shown in \cite{DKLP}, Lemma 3.5 that w.h.p. $\F_{K_1}\geq c_1$, for some absolute constant $c_1>0$. We need the conductance of $K_1-f$ where $f$ is an arbitrary edge of $K_1$. 
\begin{claim}\label{density}
In $K_1$, w.h.p., $e(S)\leq |S|$ for $|S|\leq \log^{1/2}N$.
\end{claim}
Assume this claim for now, and condition on the event in the claim. Let $\tilde e(S:\bar S)$ denote the edges other than $f$ between $S$ and $\bar S$. Then we have $\tilde e(S:\bar S)\geq e(S:\bar S)-1$. If $2\leq |S|\leq \log^{1/2}N$ then because the minimum degree in $K_1$ is at least 3, $\tilde e(S:\bar S)\geq e(S:\bar S)-1\geq |S|-1$. If $|S|\geq \log^{1/2}N$ then $e(S:\bar S)\geq 3c_1|S|$ and then $\tilde e(S:\bar S)\geq \brac{3c_1-\frac{1}{\log^{1/2}N}}|S|$ and so the conductance of $K_1-f$ is at least $c_1/2$.

The conductance of $\wK_{2,e}$ is at least $\frac{c_1}{2}\cdot \frac{1}{2\log\log N}$ because each edge of $K_1-e$ is replaced by a path of length at most $2\log\log N$. Finally note that for a random walk on a graph $G$, we have that after $t$ steps $\max\set{|P_u^{(t)}(x)-\p_x|}\leq \brac{1-\frac{\F_G^2}2}^t$, see for example \cite{LPW}. Putting $t=C(\log\log N)^2\log N$ yields the claim, for $C$ sufficiently large.
\\{\bf End of Proof of Claim \ref{cl1a}}
\paragraph{Proof of Claim \ref{cl2}}
This will follow from Lemma \ref{FVL} applied to the random walk on $\wK_{2}$, once we have verified \eqref{Tpi}. Here $T=O(\log^{1+o(1)}N)$ and $\max \p_v=O\bfrac{\log N}{N}$ and so $T\p_v=O\bfrac{\log^{2+o(1)}N}{N}$. Then we have, from \eqref{frat}, that the expected time to reach $v$ is of order
\[
\sum_{t\geq T}\Pr(\cA_t(v))= \sum_{t\geq T}\brac{\exp\set{-\frac{\p_v(1+O(T\p_v))t}{R_v}}+o(Te^{-c t/T})} \leq \frac{(1+o(1))R_v}{\p_v}.
\]
{\bf End of Proof of Claim \ref{cl2}}
\paragraph{Proof of Claim \ref{cl2a}}
For this claim we use the configuration model of Bollob\'as \cite{BM} as applied to $K_1$. We note that w.h.p. $\La\approx 2\e$ in Step 1, see \eqref{valmu}. And also that $N_{k\geq 3}\approx N$.

(a) If $N_v$ contains more than one cycle, then $K_1$ contains a set $S$ of at most $s\leq 4L$ vertices that contain at least $s+1$ edges. The probability $\Pi$ of this can be bounded as follows: let $\f= \frac{\La^3e^{-\La}}{6}\approx \frac{(2\e)^3}{6}$ be the probability that $Poisson(\La)\geq 3$. 

In the following, $s$ is the size of $S$. Then $3s\leq D\leq M_1$ is the total degree of $S$ and $d_1,\ldots,d_s$ are the individual degrees. Here $M_1\approx 2N$ will be a high probability bound on $|E(K_1)|$. We multiply by the probablity $\prod_{i=1}^s\frac{\La^{d_i}e^{-\La}}{d_i!\f}$ that these are the degrees. Then we choose $2s+2$ configuration points and pair them up in $\binom{D}{2s+2} \frac{(2s+2)!}{(s+1)!2^{s+1}}$ ways. The final term $\bfrac{s+1}{3N}^{s+1}$ bounds the probability of the pairings. Thus
\[
\Pi\leq \sum_{s=4}^{4L}\binom{N}{s}\sum_{D=3s}^{M_1} \sum_{\substack{d_1+\cdots+d_s=D\\d_1,\ldots,d_s\geq3}} \brac{\prod_{i=1}^s\frac{\La^{d_i}e^{-\La}}{d_i!\f}}\binom{D}{2s+2} \frac{(2s+2)!}{(s+1)!2^{s+1}}\bfrac{s+1}{3N}^{s+1}.
\]
But {(i)  $\prod_{i=1}^s\frac{\La^{d_i}e^{-\La}}{\f} =\frac{\La^De^{-\La s}}{\f^s}$, (ii) $\binom{D}{2s+2}\leq \frac{D^{2s+2}}{(2s+2)!}$ and \\ (iii) $\sum_{\substack{d_1+\cdots+d_s=D\\d_1,\ldots,d_s\geq3}}\prod_{i=1}^s\frac{1}{d_i!}\leq \frac{1}{6^s(D-3s)!} \sum_{\substack{d_1+\cdots+d_s=D\\d_1,\ldots,d_s\geq3}}\binom{D-3s}{d_1-3,\ldots,d_s-3}$}.
So,
\[
\Pi\leq \sum_{s=4}^{4L} \bfrac{Ne}{s}^s  \frac{e^{-\La s}}{\f^s(s+1)!2^{s+1}} \bfrac{s+1}{3N}^{s+1} \sum_{D=3s}^{M_1} \frac{\La^DD^{2s+2}}{6^s(D-3s)!} \sum_{\substack{d_1+\cdots+d_s=D\\d_1,\ldots,d_s\geq3}}\binom{D-3s}{d_1-3,\ldots,d_s-3}
\]
But (i) $1/\f\approx 6/(2\e)^3$ and $\La\approx 2\e$ and $\sum_{\substack{d_1+\cdots+d_s=D\\d_1,\ldots,d_s\geq3}}\binom{D-3s}{d_1-3,\ldots,d_s-3}=s^{D-3s}$. So,
\[
\Pi\leq \sum_{s=4}^{4L} \bfrac{Ne}{s}^s \frac{e^{o(s)}6^s}{(2\e)^{3s}(s+1)!2^{s+1}}\bfrac{s+1}{3N}^{s+1} \sum_{D=3s}^{M_1} \frac{((2+o(1))\e)^DD^{2s+2}s^{D-3s}}{6^s(D-3s)!}.
\]
Next let $u_D=\frac{((2+o(1))\e)^DD^{2s+2}s^{D-3s}}{(D-3s)!}$. Then, 
\begin{align*}
\frac{u_{D+1}}{u_D}\leq \frac{(2+o(1))\e s}{D-3s}\bfrac{D+1}{D}^{2s+2}\leq \frac{(2+o(1))e^{(2s+2)/D}\e s}{D-3s} \leq \frac12 \text{ if }D\geq 3s+10\e s.
\end{align*}
and so if $D_0=3s+10\e s$ we see that $u_{D_0}\geq \sum_{D>D_0}u_d$ and then 
\begin{align*}
\Pi&\leq 2\sum_{s=4}^{4L} \bfrac{Ne}{s}^s \frac{e^{o(s)}}{(2\e)^{3s}(s+1)!2^{s+1}}\bfrac{s+1}{3N}^{s+1} \sum_{D=3s}^{D_0}\frac{(2\e)^DD^{2s+2}s^{D-3s}}{(D-3s)!}\\
&\leq \frac{e^{O(1)}s}{N}\sum_{s=4}^{4L}\bfrac{e^2 (3s+10\e s)^{6+20\e+2/s}s^{10\e}}{s}^s=o(1).
\end{align*}
(b) The number of non-tree-like vertices is at most the number of vertices that are within $L$ of a cycle of length at most $L$. We can bound the expected number of such vertices as follows: we choose $s$ vertices for the cycle and then another $t$ for the path in $\binom{N}{s}\binom{N}{t}s!t!$ ways. We sum over the degree sequence of the chosen vertices. The factor $\frac{d_{i-1}d_i}{2N}$ bounds the probability the path plus cycle exists.
\begin{align*}
&\sum_{s,t=4}^{L}\binom{N}{s}\binom{N}{t}s!t!\sum_{d_i\geq 3,i\in[s+t]}\prod_{i=1}^{s+t}\brac{\frac{\La^{d_i}e^{-\La}}{d_i!\f}\times\frac{d_{i-1}d_i}{2N}}\text{ where }d_{0}=d_s\\
\leq &\sum_{s,t=4}^{L}\sum_{d_i\geq 3,i\in[s+t]} \prod_{i=1}^{s+t}\frac{\La^{d_i}e^{-\La}}{(d_i-2)!\f}\\
\leq &\sum_{s,t=4}^{L}\brac{\sum_{d=3}^\infty \frac{\La^{d}e^{-\La}}{(d-2)!\f}}^{s+t}\\
\leq &\sum_{s,t=4}^{L}\brac{6\sum_{d-3=0}^\infty \frac{\La^{d-3}}{(d-3)!}}^{s+t}\\
&\leq \log^{5000}N.
\end{align*}
The claim follows from applying the Markov inequality.\\
{\bf End of Proof of Claim \ref{cl2a}}\\
\paragraph{Proof of Claim \ref{cl3}}
We bound the number of returns as follows. Consider a random walk $\cX$ on $\set{0,1,2,\ldots}$ where we start the walk at $0$ and when at $0< i<L$ we go to $i+1$ with probabilty 1/3 and to $i-1$ with probability 2/3. Whenever we are at 0 we move to 1 on the next move. Here $0$ represents an arbitrary boundary vertex and $L$ represents $v$. At each point of the walk on $\wT_v$ where we are at a vertex of $K_1$, we have probability at most 1/3 of moving closer to $v$.

Now consider a time $t$ when $\cX(t)=L/2$. If $\cX(t+L/4)\geq L/2$ then at least $L/8$ of these $L/4$ moves must be in the increasing direction. But the Chernoff bounds then imply that
\[
\Pr\brac{\cX\brac{t+\frac{L}4}\geq \frac{L}2}\leq \Pr\brac{Bin\brac{\frac{L}4,\frac13}\geq \frac{L}8}\leq \exp\set{-\frac{L}{12}\times \frac{1}{27}}\leq \frac{1}{\log^3N}.
\]
It follows from this that the probability a walk from the boundary reaches $v$ in $T$ steps is at most $T/\log^3N$ and then the expected number of visits is at most $T^2/\log^3N=o(1)$.
\\ {\bf End of Proof of Claim \ref{cl3}}

\paragraph{Proof of Claim \ref{cl4}}
(a) For an edge $e$ of $\tilde T_v$, we have that $\Pr(\psi(e)\geq t)\leq (1-\e(1-\e))^{t\ell_1}$, a probabilistic bound on the length of the path $P_e$ ins Step 2 of {\sc{giantconstruction}} (see \eqref{valmu}). The $\psi$ values of each such edge are independent and so as $\tilde T_v$ contains $m\leq 3\cdot 2^{1000\log\log N}$ edges then
\begin{align}
\Pr(v\notin W_s)&\leq \sum_{s_1+\cdots+s_m=t\geq s\k}\prod_{i=1}^m(1-\e(1-\e))^{s_i\ell_1}\nn\\ &=\sum_{t\geq s\k}\binom{m+t-1}{t-1}(1-\e(1-\e))^{t\ell_1}\nn\\
&\leq \sum_{t\geq s\k}\brac{\frac{(m+t)e}{t}\cdot \exp\set{-\frac{\k\e(1-\e)\log N}{\log\log N}}}^{t} \label{skt}
\end{align}
Let $u_t$ denote the summand in \eqref{skt}. We have that if $s\k\leq m$ then
\mult{eq1}{
\sum_{t=s\k}^mu_t\leq \sum_{t=s\k}^m \brac{2me\cdot \exp\set{-\frac{\k\e(1-\e)\log N}{\log\log N}}}^{t}\leq\\
\sum_{t=s\k}^m \exp\set{-\frac{t\brac{\k\e(1-\e)\log N-700(\log\log N)^2}}{\log\log N}}\\
\leq\exp\set{-\frac{s\k(\k\e(1-\e)\log N-800(\log\log N)^2)}{\log\log N}}.
}
And,
\mult{eq2}{
\sum_{t\geq \max\set{s\k,m}}u_t\leq \sum_{t\geq \max\set{s\k,m}}\brac{2e\cdot \exp\set{-\frac{\k\e(1-\e)\log N}{\log\log N}}}^{t}\\
\leq\exp\set{-\frac{s\k(\k\e(1-\e)\log N-800(\log\log N)^2)}{\log\log N}}.
}
Part (a) of the claim follows from \eqref{eq1} and \eqref{eq2}.

(b) 
Given $T^*_v$ with $\psi(v)=s$ we modify it in such a way that the expected number of returns increase and then bound this as claimed.  Roughly speaking, we concentrate all the resistance at the induced paths incident with $v$; by proving that this only increases effective resistance, it allows us to reduce the problem of bounding the effective resistance to this case.

Suppose then that $v\neq w\in V(K_1)\cap V(T_v^*)$ and $w$'s neighbors in $K_1$ are $w_0,w_1,w_2$ where $w_0$ is the one closer to $v$ than $w$ on the tree $T_v^*$. Suppose also that $\psi(\{w,w_1\})+\psi(\{w,w_2\})>0$. We transform $T_v^*$ by increasing the length of the path from $w$ to $w_0$ by $\psi(\{w,w_1\})+\psi(\{w,w_2\})$ and reducing the lengths of the paths joining $w$ to $w_1$ and $w$ to $w_2$ to be single edges so that $\psi(w,w_1)=\psi(w,w_2)=0$. This preserves the sum of $\psi$ values and we claim that $\wR_{\eff,e}(v,\wh B_v)$ does not decrease. In this way, $\wR_v$ does not decrease, see \eqref{Doyle}. To see this, let $\r(w),w\in V(T^*_v)$ be the effective resistance between $w$ and $\wh B_v$ as measured in the sub-tree with root $w$. Let $w_0,w_1,w_2$ be as before and let $w_3$ be the other neighbor of $w_0$ further from $v$ (if it exists). Before the transformation, we have
\beq{bad1}{
\frac{1}{\r(w_0)}=\frac{1}{\ell(w_0,w)+\frac{1}{\frac{1}{\ell(w_0,w_1)+\r(w_1)}+\frac{1}{\ell(w_0,w_2)+\r(w_2)}}} +\frac{1}{\ell(w_0,w_3)+\r(w_3)}
}
and after the transformation we have
\beq{bad2}{
\frac{1}{\r(w_0)}=\frac{1}{\ell(w_0,w)+\ell(w,w_1)+\ell(w,w_2)-2+\frac{1}{\frac{1}{1+\r(w_1)}+\frac{1}{1+\r(w_2)}}} +\frac{1}{\ell(w_0,w_3)+\r(w_3)}
}
The R.H.S. of \eqref{bad2} is at most the R.H.S. of \eqref{bad1}. This follows from the inequality
\beq{bad3}{
\a+\b+\frac{1}{\frac{1}{\g}+\frac{1}{\d}}-\frac{1}{\frac{1}{\a+\g}+\frac{1}{\b+\d}}\geq 0.
}
After multiplying through by $(\a+\b+\g+\d)(\g+\d)$ we obtain an expression with only positive terms. We apply \eqref{bad3} with $\a=\ell(w,w_1)-1,\b=\ell(w,w_2)-1,\g=\r(w_1),\d=\r(w_2)$.

Proceeding in this way, we end up with a tree in which all maximal induced paths in $T_v^*$ are of length one, except for the one incident with $v$. Furthermore, $\psi$ is unchanged and resistance is not decreased by this transformation. The sum of the lengths of the maximal induced paths incident with $v$ is then $\psi(v)+2$ (recall that $v$ has degree 2 in $T_v^*$)

Finally, we balance the lengths of these two paths incident with $v$ by replacing the path lengths at $v$ by $1+\lceil\frac{\psi(v)}{2}\rceil$ and $1+\rdown{\frac{\psi(v)}{2}}$. This increases resistance because for positive integers $x,y$, we have $\frac1x+\frac1y\geq \frac{1}{\rdup{(x+y)/2}}+\frac{1}{\rdown{(x+y)/2}}$.

Note next that the effective resistance between the root of a binary tree and its leaves is at most one. To see this we let $R_d$ be the effective resistance if the depth is $d$. Then we have
\[
R_d=\frac{1}{\frac{1}{R_{d-1}+1}+\frac{1}{R_{d-1}+1}}=\frac{R_{d-1}+1}{2}.
\]
It then follows that 
\[
\wR_{\eff,e}(v,\wh B_v)\leq \frac{1}{\frac{1}{1+\lceil\frac{\psi(v)}{2}\rceil}+\frac{1}{1+\rdown{\frac{\psi(v)}{2}}}} =\frac{\brac{1+\lceil\frac{\psi(v)}{2}\rceil}\brac{1+\rdown{\frac{\psi(v)}{2}}}}{2+\psi(v)} \frac{\brac{1+\frac{\psi(v)}2}^2}{2+\psi(v)}= \frac{\psi(v)}4+\frac12.
\]
\\ {\bf End of Proof of Claim \ref{cl4}}
\paragraph{Proof of Claim \ref{density}}
Let $\f\approx\frac{\La^3e^{-\La}}{6},\La\approx 2\e$ be the probability that $Poisson(\La)\geq 3$. For a set $S\subseteq V(K_1)$ with $|S|=s$, we have
\begin{align}
\Pr(e(S)\geq s+1)&\leq \sum_{D\geq 3s}\sum_{d_1+\cdots+d_s=D}\prod_{i=1}^{s}\frac{\La^{d_i}}{d_i!\f} \times 2^D\bfrac{D}{N}^{s+1}\nn\\
&\leq \sum_{D\geq 3s}2^D\bfrac{D}{N}^{s+1}\times \frac{\La^D}{\f^{s}}\times \frac{s^D}{D!}.\label{eS}
\end{align}
{\bf Explanation:} Let $M_1=|E(K_1)|$ and let $D=D(S)$ denote the sum of the degrees in $S$ and $\sum_{d_1+\cdots+d_s=D}\prod_{i=1}^s\frac{\La^{d_i}}{d_i!\f}$ bounds the probability that this sum is $D$. To bound the probability that $e(S)\geq s+1$ we have to choose some subset of the $D$ configuration points of size $s+1$ that pair with configuration points in $S$. We bound the probability that such a set of configuration points exist  by $2^D\bfrac{D}{N}^{s+1}$. Note here that $M_1\geq 3N/2$ and the probability that a configuration point of $S$, pairs with another such point is bounded by $(D-1)/(2M-1)$, conditional on previous pairings of points in $S$. Finally, we bound $\sum_{D\geq 3s}\sum_{d_1+\cdots+d_s=D}\prod_{i=1}^s\frac{1}{d_i!}$ by $\frac{s^D}{D!}$.

Letting $u_D=2^D\La^DD^{s}\frac{s^D}{D!}$ we see that 
\[
\frac{u_D}{u_{D+1}}\leq 2\times e^{s/D}\times (2+o(1))\e\times \frac{s}{D}\ll 1.
\]
So, 
\mults{
\Pr(\exists |S|\leq \log^{1/2}N:e(S)\geq |S|+1)\leq \frac{3\log^{1/2}N}{ N}\sum_{s=4}^{\log^{1/2}N}\binom{N}{s}\brac{e^{o(1)}\times 8\times 3\times 6\times \frac{e^3}{27}\times \bfrac{s}{N}}^s\\
\leq \frac{3\log^{1/2}N}{N}\sum_{s=4}^{\log^{1/2}N}150^s=o(1).
}
\\ {\bf End of Proof of Claim \ref{density}}

This completes the proof of Lemma \ref{maxReff}.   (Because there are so few non-tree-like vertices, for such $v$ we will bound $\wR_{\eff,e}(v,v_0)$ by the diameter $O\bfrac{\log N}{\e}$ of $K_2$.)
\end{proof}
\section{Proof of Theorem \ref{Mexp}}\label{prf}
Theorem \ref{themg} allows us to work with $H$ instead of $C_1$, and we assume from now on that $H$ has the following properties that have been shown or claimed to hold w.h.p. above, namely:

Assumed Properties of $H$: APOH
\begin{enumerate}[(i)]
  \item $|V(K_1)|\approx 4N/3$,
  \item $|E(K_1)|\approx 2N$,
  \item $|V(K_2)|\approx 2\e^2 n$,
  \item $|E(K_2)|\approx 2\e^2 n$,
  \item $|V(H)|\approx 2\e n$,
  \item $|E(H)|\approx 2\e n$.
  \item There are between $\frac12c_1N^{1-\g+O(\e)}$ and $2c_2N^{1-\g+O(\e)}$ trees of depth at least $\g\e^{-1}\log N$ and there are no trees of depth exceeding $\frac{2\log N}{\e}$.
\end{enumerate}
In what follows, we may write in terms of unconditional probabilities and expectations, but these will refer to the GFF and will assume that $H$ is a fixed graph with property APOH. There are some places where we have to prove further properties of $H$, but we will be sure to flag them.
\subsection{Lower Bound}
It turns out that for the lower bound, it suffices to consider the maximum over a very restricted set, consisting just of a single vertex from each sufficiently deep tree.
\begin{lemma}
\[
\E\brac{\max_{v\in V(G)}\eta_v}\geq (1+o(1))\frac{\log(\e^3n)}{(2\e)^{1/2}}.
\]
\end{lemma}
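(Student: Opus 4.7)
The plan is to prove the lower bound by restricting the maximum in the definition of $M$ to a sparse set of test vertices—namely, one vertex per sufficiently deep tree attached in Step 3—and then to reduce to a family of i.i.d.\ Gaussians via Sudakov–Fernique. First I set $d=\lfloor(\log N)/(2\e)\rfloor$ and apply Lemma~\ref{xlem1}(a) (APOH(vii)) with $\g=1/2$, which yields a collection $\cT$ of $s:=\tfrac12 c_1 N^{1/2+O(\e)}$ attached trees of depth $\geq d$; from each $T\in\cT$, rooted at $r_T\in V(K_2)$, I pick a vertex $v_T$ at depth exactly $d$ in $T$. The choice $\g=1/2$ is forced by the fact that the expected maximum of $s$ i.i.d.\ $\cN(0,d)$ variables is $\sim\sqrt{d\cdot 2\log s}\sim \sqrt{\g(1-\g)}\cdot(\log N)/\sqrt{2\e/2}$, which is maximized at $\g=1/2$ and exactly matches the target $(\log N)/\sqrt{2\e}$.

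Next I decompose $\eta_{v_T}=\eta_{r_T}+Z_T$ with $Z_T:=\eta_{v_T}-\eta_{r_T}$, and I claim that the $Z_T$'s are i.i.d.\ $\cN(0,d)$ and jointly independent of $(\eta_u)_{u\in V(K_2)}$. To verify this, I would use the identity
\[
\Cov(\eta_x,\eta_y)=\tfrac12\bigl(R_{\eff}(x,v_0)+R_{\eff}(y,v_0)-R_{\eff}(x,y)\bigr),
\]
(a direct consequence of $\eta_{v_0}=0$ and Definition~1(iii)), together with the observation that in $H$ any path from a vertex inside a tree $T$ to a vertex $u\in V(K_2)\cup\bigcup_{T'\neq T}V(T')$ must pass through $r_T$, so $R_{\eff}(v_T,u)=d+R_{\eff}(r_T,u)$ and $R_{\eff}(v_T,v_0)=d+R_{\eff}(r_T,v_0)$. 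Substituting gives $\Cov(Z_T,\eta_u)=0$ for every $u\in V(K_2)$, $\Cov(Z_T,Z_{T'})=0$ for $T\neq T'$, and $\Var(Z_T)=d$; joint normality then delivers the claimed independence.

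Now set $X_T=\eta_{v_T}$ and $Y_T=Z_T$. From the decomposition and independence,
\[
\E(X_T-X_{T'})^2=2d+\E(\eta_{r_T}-\eta_{r_{T'}})^2\geq 2d=\E(Y_T-Y_{T'})^2,
\]
so the Sudakov–Fernique comparison \eqref{maxcomp} yields $\E\max_T\eta_{v_T}\geq\E\max_T Z_T$. Applying the asymptotic \eqref{maxnorm} to the i.i.d.\ family $(Z_T/\sqrt d)$, and using $\log s=(\tfrac12+O(\e))\log N+O(1)$, produces
\[
\E\max_T Z_T=\sqrt d\cdot(1+o(1))\sqrt{2\log s}=(1+o(1))\sqrt{\tfrac{\log N}{2\e}}\cdot\sqrt{\log N}=(1+o(1))\frac{\log(\e^3 n)}{\sqrt{2\e}}.
\]
Since $\E\max_{v\in V(G)}\eta_v\geq\E\max_T\eta_{v_T}$, this is exactly the required bound (the lower-order $\log\log s/\sqrt{\log s}$ term from \eqref{maxnorm} is absorbed into $1+o(1)$ because $N\to\infty$).

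The main obstacle is the independence statement in Step 2: one must verify both that each $Z_T$ is independent of the whole restricted field $(\eta_u)_{u\in V(K_2)}$ (not merely of $\eta_{r_T}$) and that the family $\{Z_T\}_{T\in\cT}$ is mutually independent. Morally this is just the Markov property of the GFF across the separating set $V(K_2)$, but since the excerpt defines the GFF via effective resistances rather than as a Markov random field, I would spell it out using the covariance computation above. Everything else—optimizing $\g$, Sudakov–Fernique, and the asymptotic for the maximum of i.i.d.\ normals—is routine.
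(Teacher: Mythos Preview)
Your proposal is correct and follows essentially the same approach as the paper: pick one deep vertex per tree of depth $\geq d\approx(\log N)/(2\e)$, compare via Slepian/Sudakov--Fernique to i.i.d.\ $\cN(0,d)$ variables, and apply \eqref{maxnorm} with $\g=1/2$. Your detour through the Markov property of the GFF to establish independence of the $Z_T$'s is correct but more than is needed---the paper simply compares $(\eta_{v_T})$ directly against an \emph{abstract} i.i.d.\ $\cN(0,d)$ vector, using only that $R_{\eff}(v_T,v_{T'})\geq 2d$ (which is exactly your cut-vertex computation $R_{\eff}(v_T,v_{T'})=2d+R_{\eff}(r_T,r_{T'})$).
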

\begin{proof}
We first identify a subset of verties on which the GFF \emph{behaves as having} independent components and then produce a lower bound using Slepian's comparison \eqref{maxcomp}, combined with \eqref{maxnorm}. 
Consider the set of Galton-Watson trees attached to $H$ of depth at least $d={i\e^{-1}}$, $i$ to be chosen. Choose one vertex at depth $d$ from each tree to create $S_d$. It follows from  \eqref{BinTrees} with $\g=i/\log N$, that there will be at least $cN^{1-\g+O(\e)}$ such trees for some constant $c>0$. Let $(\wheta_v)_{v \in S_d}$ be a random vector with i.i.d. $\mathcal{N}(0,\g\e^{-1}\log N)$ entries. Then $\wheta_v-\wheta_w$ has variance exactly $2\g\e^{-1}\log N$ whereas $\eta_v-\eta_w$ has variance at least $2\g\e^{-1}\log N$ (the graph-distance between $v$ and $w$ is at least $2d = 2i\e^{-1} =2\g\e^{-1}\log N$) and so it follows from \eqref{maxcomp} that
\beq{competa}{
\E(\max\set{\eta_v:v\in S_d})\geq \E(\max\set{\wheta_v:v\in S_d}).
}
Applying \eqref{maxnorm}  we see that
\[
\E(\max\set{\wheta_v:v\in S_d})\geq (1+o(1))(2\log(|S_d|)^{1/2}\cdot (\g\e^{-1}\log N)^{1/2}
\]
$\hat \eta_v$ has the same distribution as a standard Gaussian multiplied by $(\g\e^{-1}\log N)^{1/2}$). Using $|S_d| \geq cN^{1-\g+O(\e)}$, we obtain
\mult{loweta}{
\E(\max\set{\wheta_v:v\in S_d})\geq (1+o(1))(2\log(cN^{1-\g+O(\e)}))^{1/2}\cdot (\g\e^{-1}\log N)^{1/2}\\
\approx \frac{(2\g(1-\g))^{1/2}\log N}{\e^{1/2}}.
}
Putting $\g=1/2$ in \eqref{loweta} and applying \eqref{competa} yields
\[
\E\brac{\max_{v\in V(G)}\eta_v}\geq \E\brac{\max_{v \in S_d} \eta_v }\geq (1+o(1))\frac{\log N}{(2\e)^{1/2}}.
\]
Recalling that $N = \e^3n$, this finishes the proof of the lemma.
\end{proof}

The important task is to achieve a matching upper bound.
\subsection{Upper Bound}\label{UpperBound}
We begin with an outline of the proof of the upper bound.

We let $\k:=\lceil 1/\e\rceil$, and will write $\ell_0=\rdup{\log_2\k}$. 
We say that $v\in G$ is a \emph{$d$-survivor} if it has at least one $d$-descendant $x_d(v)$; that is, a vertex $x_d(v)$ such that $\dist(K_2,x_d(v))=\dist(K_2,v)+\dist(v,x_d(v))=\dist(K_2,v)+d$. 

Recall that we have oriented the induced paths $P_e$ from $h_e$ to $t_e$. See the paragraph following Remark \ref{resrem}. Then for each such $e$ and $v\in V(P_e)$ we let $d_1(v)$ denote the distance from $v$ to $V(K_1)$ traversing $P_e$ in the chosen direction. 
Let $e(v)$ denote the edge of $K_2$ corresponding to the path $P_e$ containing $v$.

Each $v\in V(H)\setminus V(K_2)$ lies in a Galton-Watson tree with a root $w=\r_{GW}(v)\in V(K_2)$ lying on a path created in Step 2 from an edge $e$. Let $d_1(v)=d_1(w)$ and let 
\[
U^{i,0,k}=\set{v\in V(K_2):d_1(v)\in [i\k,(i+1)\k-1],e(\r_{GW}(v))\in \wh A_k\setminus \wh A_{k+1}}
\]
and define for each $1\leq j\leq 2\log N$ and $0\leq i,k\leq 2\log N$ a set $U^{i,j,k}$ by choosing, for each $\k$-survivor in $U^{i,j-1,k}$, an arbitrary $\k$-descendant $x_\k(v)$; these chosen $\k$-descendants comprise $U^{i,j,k}$.  Evidently, we have for $U=\bigcup_{i,j,k\geq 0} U^{i,j,k}$ that
\begin{equation}\label{maxdecomp}
\E(\max_{v\in V} \eta_v)\leq \E(\max_{u\in U} \eta_u)+\E(\max_{v\in V}(\eta_v-\eta_{u(v)})),
\end{equation}
for any function $u:V\to U$. We will bound the two terms on the right hand side separately. Let
$$T_\d=\frac{e^\d\log N}{(2\e)^{1/2}},$$
where $\d=\max\set{10\e,\frac{1}{\log^{1/3}N}}$.
\begin{lemma}\label{extralemmaA}
With the notation introduced above, we have
\begin{equation}
\E(\max_{u\in U} \eta_u)\leq (1+o(1))T_\d.\label{firstterm}\\
\end{equation}
\end{lemma}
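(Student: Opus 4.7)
The plan is to decompose $U = U^0 \sqcup \bigsqcup_{j \geq 1} U^j$ and bound $\E \max_{v \in U^j} \eta_v$ separately for each level using the Gaussian maximal inequality \eqref{emax}, $\E \max_{v \in U^j} \eta_v \leq \sigma_j\sqrt{2\log|U^j|}$, where $\sigma_j^2 := \max_{v \in U^j}\Var(\eta_v) = \max_{v \in U^j} R_{\eff}(v, v_0)$. The level bounds will each be at most $(1+o(1))T_\delta$, and the $O(\log N)$ levels will be combined via Gaussian concentration \eqref{concnorm} at an additive cost of $o(T_\delta)$.

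For $j \geq 1$, every $v \in U^j$ lies at tree-distance $j\kappa$ from some $w \in K_2$. Since effective resistance satisfies the triangle inequality, combining \eqref{Reffdist} (so $R_{\eff}(v,w) \leq \dist(v,w) = j\kappa$) with Lemma \ref{maxReff} (so $R_{\eff}(w, v_0) = O(1/\e)$) yields
\[
\sigma_j^2 \leq j\kappa + O(1/\e).
\]
For the size, each $v \in U^j$ belongs to a step-3 tree of depth at least $j\kappa$. Applying APOH(vii) with $\gamma := x = j\kappa\e/\log N$ (noting $\kappa\e \in [1,2)$) together with the observation that, by Lemma \ref{lemdepth}, each such tree has on average $O(1)$ vertices in $L_{(j-1)\kappa}$ that are $\kappa$-survivors, gives $|U^j| \leq N^{1-x+O(\e)}$ for $x \in (0,1]$. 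Plugging in,
\[
\sigma_j\sqrt{2\log|U^j|} \;\leq\; (1+o(1))\sqrt{\frac{x\log N}{\e}}\cdot\sqrt{2(1-x)\log N} \;=\; (1+o(1))\sqrt{2x(1-x)}\;\frac{\log N}{\sqrt{\e}},
\]
which is maximized at $x = 1/2$, yielding $(1+o(1))\log N/\sqrt{2\e} \leq (1+o(1))T_\delta$. For $x > 1$ (i.e.\ $j\kappa > \log N/\e$), an extinction bound from Lemma \ref{lemdepth} combined with Markov's inequality shows $|U^j|$ is subpolynomially small w.h.p., so the contribution is negligible; for $j\kappa > 2\log N/\e$, APOH(vii) gives $U^j = \emptyset$.

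The level $j = 0$, i.e.\ $U^0 = K_2$, requires more care because $|K_2| \sim 2N/\e$ may be much larger than $N$. We split $K_2$ into the core $K_1$ (with $|K_1| = O(N)$) and the interior vertices of the step-2 paths. On $K_1$, a direct application of \eqref{emax} with $\sigma^2 \leq O(1/\e)$ gives $O(\sqrt{\log N/\e}) = o(T_\delta)$. On each step-2 path $P_i$ of length $\ell_i \leq 2\log N/\e$, the GFF conditional on its endpoints is a discrete Brownian bridge, whose excess above the endpoint maximum is sub-Gaussian with variance proxy $\ell_i/4$. Stratifying paths by their length via the geometric tail (at most $Ne^{-k}$ paths of length $\geq k/\e$) and optimizing shows
\[
\E\max_{v\in K_2}\eta_v \;\leq\; o(T_\delta) + \max_{k\in[0,\log N]}\sqrt{\tfrac{k(\log N - k)}{\e}}\cdot(1+o(1)) \;\leq\; (1+o(1))T_\delta.
\]
Finally, to combine the levels, each $Y_j := \max_{v\in U^j}\eta_v$ is a $1$-Lipschitz function of the underlying Gaussians with parameter $\sigma_j^2 \leq O(\log N/\e)$, so by \eqref{concnorm} and the union bound over $J = O(\log N)$ levels,
\[
\E\max_u \eta_u \;\leq\; \max_j \E Y_j + O\!\left(\sqrt{(\log N/\e)\log\log N}\right) \;=\; (1+o(1))T_\delta + o(T_\delta).
\]

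The main obstacle is the $U^0$ step: a crude application of \eqref{emax} with $|K_2|$ can be too weak in the regime where $|\log\e|$ is much larger than $\log^2 N$, and the path-interior analysis (via Brownian bridge excess) is needed to recover the right constant. A secondary technicality is tracking the $(1+o(1))$ factor through the optimization in $j$, which requires that the $O(\e)$ slack in APOH(vii) not inflate the coefficient $\sqrt{2x(1-x)}$ at $x = 1/2$.
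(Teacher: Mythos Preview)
Your high–level strategy coincides with the paper's: split $U$ into the levels $U^j$, bound each $\E Z_j=\E\max_{v\in U^j}\eta_v$ via the Gaussian maximal inequality \eqref{emax} using $\sigma_j^2\le j\kappa+O(1/\e)$ and a size bound on $|U^j|$, and then combine the levels using Gaussian concentration. The per–level optimization $\sqrt{2x(1-x)}\log N/\sqrt{\e}$ with $x=j\kappa\e/\log N$ is exactly what the paper obtains (its inequality $(\kappa\e j)^{1/2}(\log N-\e\kappa j)^{1/2}\le\tfrac12\log N$ is the same AM--GM step).

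Where you genuinely diverge is in two places. First, for $U^0=K_2$ the paper does \emph{not} use a Brownian--bridge decomposition; it runs a dyadic chaining argument (sets $I_i$ of pairs at distance $2^i$ along the Step~2 paths) and obtains the stronger conclusion $\E Z_0=O(\sqrt{\log N/\e})=o(T_\delta)$. Your bridge argument is valid (the Markov property of the GFF makes the path interiors, conditionally on $K_1$, independent bridges), but it only yields $\E Z_0\le c\,T_\delta$ for some $c<1$; this is enough here, though less sharp. Second, to combine levels the paper writes $\E\max_{u\in U}\eta_u\le T_\delta+\sum_j\int_{T_\delta}^\infty\Pr(Z_j\ge t)\,dt$ and needs the \emph{strict} gap $\E Z_j\le e^{-\delta/2}T_\delta$, which is precisely what forces the choice $\delta=\log^{-1/3}N$. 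Your route, $\E\max_j Z_j\le\max_j\E Z_j+O(\sigma_{\max}\sqrt{\log J})$ with $\sigma_{\max}^2=O(\log N/\e)$ and $J=O(\log N)$, is cleaner and sidesteps the need to tune $\delta$.

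One point that needs tightening: your bound $|U^j|\le N^{1-x+O(\e)}$ cannot be read off APOH(vii) directly, since APOH(vii) counts \emph{trees} of depth $\ge j\kappa$, whereas $|U^j|$ counts $\kappa$--survivors at level $(j-1)\kappa$, and a single deep tree may contain many such survivors. Your ``on average $O(1)$ per tree'' observation is correct in expectation but is not yet a w.h.p.\ statement. The paper handles this by computing $\E|U^j|=O(Ne^{-\e\kappa j})$ directly (roots $\times$ $\mu^{(j-1)\kappa}$ $\times$ $\Pr(L_\kappa\neq\emptyset)$) and then invoking Chernoff when this mean is large and Markov (with a $\log^4 N$ cap) when it is small; you should insert the same step.
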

\begin{lemma}\label{extralemmaB}
There is a function $u:V\to U$ such that
\begin{equation}
\E(\max_{v\in V}(\eta_v-\eta_{u(v)}))=o(T_{\delta}).\label{secondterm}
\end{equation}
\end{lemma}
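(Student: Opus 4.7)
The plan is to exploit the independence of the Galton--Watson trees attached to $K_2$ in Step~3 of Section~\ref{struct}, together with the fact that $R_{\eff}$ equals graph distance inside any tree, which makes the GFF restricted to each tree a Brownian motion on that tree.

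For the choice of $u$: put $u(v)=v$ when $v\in K_2$; otherwise let $r\in V(K_2)$ be the root of the Galton--Watson tree $T_r$ containing $v$, and $j=\lfloor d(v)/\kappa\rfloor$ where $d(v)$ is the depth of $v$ below $K_2$. If $j=0$ put $u(v)=r\in U^0$; if $j\geq 1$, let $w\in T_r$ be the ancestor of $v$ at depth $(j-1)\kappa$, which is necessarily a $\kappa$-survivor since $v$ is its descendant at distance $\geq\kappa$, and put $u(v)=x_\kappa(w)\in U^{j}$. Then $u(v)\in T_r$ and $\dist(v,u(v))\leq 3\kappa$, so $\Var(\eta_v-\eta_{u(v)})\leq 3\kappa$ by \eqref{Reffdist}.

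Because the trees $T_r$ are vertex-disjoint outside $K_2$, the restricted processes $(\eta_v-\eta_r)_{v\in T_r}$ for different $r$ are mutually independent conditional on $K_2$, each a Brownian motion on $T_r$. Letting $Y_r=\max_{v\in T_r}(\eta_v-\eta_{u(v)})$, we have $\max_{v\in V}(\eta_v-\eta_{u(v)})=\max_{r\in V(K_2)}Y_r$ with the $Y_r$ independent. Within each $T_r$, \eqref{emax} yields $\E Y_r\leq\sqrt{6\kappa\log|T_r|}$, while \eqref{concnorm} makes $Y_r-\E Y_r$ sub-Gaussian with parameter $\leq\sqrt{3\kappa}$.

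To aggregate across trees, we stratify by tree depth. By Lemma~\ref{lemdepth}, the survival probability of a Poisson($\mu$) tree to depth $\kappa\sim 1/\e$ is $\Theta(\e)$, and $|V(K_2)|=\Theta(N/\e)$ by APOH(iii), so the number of \emph{deep} trees (depth $\geq\kappa$) is $O(N)$. Aggregating the sub-Gaussian $Y_r$'s over these $O(N)$ trees contributes order $\sqrt{\kappa\log N}=O(\sqrt{\log N/\e})$ to $\max_r Y_r$, which is $o(T_\delta)$ since $T_\delta=\Theta(\log N/\sqrt\e)$ and $\log N\to\infty$. For the remaining \emph{shallow} trees (depth $<\kappa$), where $u(v)=r$, we exploit the sharper bound $\Var(\eta_v-\eta_r)\leq d(v)<\kappa$ and stratify further by the depth $d$: Lemma~\ref{lemsmalldepth} controls the number of vertices at depth $d$ across all shallow trees, and applying \eqref{emax} within each depth-slice (with variance $\leq d$ rather than the worst-case $3\kappa$) and combining with sub-Gaussian concentration over the independent $Y_r$'s in each slice gives $o(T_\delta)$ once summed.

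The main obstacle is this shallow-tree bookkeeping. A naive union bound over all $|V(K_2)|=O(N/\e)$ shallow trees picks up a factor $\sqrt{\log(N/\e)/\e}$, which in regimes with $\log(1/\e)\gg\log N$ would exceed $T_\delta$. The point of the depth-by-depth stratification, together with the observation that most shallow vertices lie very close to $K_2$ so their per-vertex variance is much smaller than the worst-case $3\kappa$, is to trade off the logarithmic count against the shrinking variance so that the resulting bound is $o(T_\delta)$ uniformly in the allowed range of $\e$.
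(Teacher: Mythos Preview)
Your strategy of exploiting the independence of the Galton--Watson trees and the Markov property of the GFF is natural, and your choice of $u(v)$ with $\dist(v,u(v))\leq 3\k$ is fine.  However, the argument has a genuine gap that your final paragraph hints at but does not close.  The single--scale bound $\E Y_r\leq\sqrt{6\k\log|T_r|}$ from \eqref{emax} is too weak: a subcritical Poisson($\m$) tree with $\m=1-\e+O(\e^2)$ has total progeny with tail $\Pr(|T|>k)\asymp k^{-1/2}e^{-\Theta(\e^2)k}$, so w.h.p.\ the largest of the $\Theta(N/\e)$ trees has size of order $\e^{-2}\log(N/\e)$, whence $\log|T_r|=\Theta(\log(1/\e))$ and the bound becomes $\sqrt{\k\log(1/\e)}=\sqrt{\log(1/\e)/\e}$.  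This is $o(T_\d)$ only if $\log(1/\e)=o(\log^2 N)$, and that fails in admissible regimes (e.g.\ $\e=n^{-1/3}\log n$, where $N=\log^3 n$, $\log N=3\log\log n$, but $\log(1/\e)\asymp\log n$).  Your depth--by--depth stratification does not rescue this: at each depth $d\leq\k$ there are still $\Theta(N/\e)$ vertices (since $\m^d\in[e^{-2},1]$), so \eqref{emax} at depth $d=\k$ again gives $\sqrt{\k\log(N/\e)}$; and even if you use Lemma~\ref{lemsmalldepth} to reduce to the $O(N/(\e d))$ trees surviving to depth $d$ and then invoke independence, the within--tree width at depth $d$ can be of order $d$, so $\max_r\E W_r^{(d)}$ still picks up a $\sqrt{d\log d}\approx\sqrt{\k\log(1/\e)}$ at $d=\k$.

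The paper's proof avoids exactly this $\log(1/\e)$ factor via a dyadic (Dudley--type) chaining: it builds sets $J_i$ of pairs at scale $2^i$ for $i=0,\dots,\ell_0$, and shows $\E|J_i|=O(\e n/2^{2i})$, the extra $2^{-i}$ coming from the fact (Lemma~\ref{lemsmalldepth}) that only an $O(2^{-i})$ fraction of vertices in $W_{2^i}$ are $2^i$--survivors.  The chaining sum $\sum_i(\ell_0-i+1)\sqrt{2^i\log|J_i|}$ is then geometrically dominated by the coarsest scale $i=\ell_0$, where $|J_{\ell_0}|=O(\e n/\k^2)=O(N)$ and the contribution is $O(\sqrt{\k\log N})=O(\sqrt{\log N/\e})$, with no $\log(1/\e)$ appearing.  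The missing idea in your proposal is this multiscale refinement: a single--scale union bound at scale $\k$ cannot see that the ``effective count'' at that scale is $N$ rather than $N/\e$.
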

Observe that the proof of the upper bound in Theorem \ref{Mexp} follows from \eqref{maxdecomp} and Lemmas \ref{extralemmaA} and \ref{extralemmaB}; it remains just to prove these two Lemmas.

\subsubsection{Proof of Lemma \ref{extralemmaA}}\label{321}
We let $Z_{i,j,k}=\max_{v\in U^{i,j,k}}\eta_v$ and 
\mult{bound1}{
  \E(\max_{v\in U} \eta_v)=\E\left(\max_{0\leq i,j,k\leq 2\log N} Z_{i,j,k}\right)
  \leq T_\d+\sum_{0\leq i,j,k\leq 2\log N} \E\brac{\max(Z_{i,j,k}-T_\d,0)}
  \\= T_\d+\sum_{i,j,k=0}^{2\log N}\int_{t\geq T_\d}\Pr(Z_{i,j,k}\geq t)dt.
}
The bounds on $i,j,k$ follow from Lemmas \ref{longpath}, \ref{xlem1}, \ref{maxReff} respectively.
     
Our task now is to bound the sum of integrals in \eqref{bound1}. In words, the idea is that $U$ is partitioned into smaller pieces $U^{i,j,k}$ such that each piece is of a small enough cardinality such that the Gaussian concentration of $Z_{i,j,k}$ around its mean allows us to control the above integrals. 

Let a vertex of $v$ of $K_2$ be tree-like if the endpoint $t_e$ of the path $P_e$ containing it is a tree-like vertex of $K_1$. Similarly, a vertex of a Galton-Watson tree is tree-like if its root is tree-like.   Now write
\[
U^{i,j,k}=U^{i,j,k}_T\;\dot\cup\; U^{i,j,k}_{N}
\]
where $U^{i,j,k}_T$ and $U^{i,j,k}_{N}$ are those vertices whose GW trees are attached at tree-like and non-tree-like vertices of $K_2$, respectively.

{\bf Case 1: $U^{i,j,k}_T$ for $k_0=\log^{1/2}N\leq k\leq 2\log N$: tree-like vertices}\\
Because we are bounding the sum of integrals on the RHS of \eqref{bound1} it will be safe to ignore events of probability $o(\log^{-3}N)$. So from now on, w.h.p. will mean with probability $1-o(\log^{-3}N)$. We will work assuming that $K_1$ is fixed and satisfies the conditions APOH(i) and (ii)  defined at the beginnning of Section \ref{prf}. We can then focus on $0\leq i,j,k\leq 2\log N$. This is because it follows from Lemmas \ref{longpath}, \ref{lemdepth}(b) and \ref{maxReff} that these bounds hold with probability $1-O(N^{-1-o(1)})$.
\begin{claim}
We have that w.h.p. 
\beq{SizeUj}{
|U^{i,j,k}_T|\leq  O\brac{Ne^{-\e(1-\e)\k(i+j+k)}}\quad \text{ for }0\leq i,j\leq 2\log N,k_0\leq k\leq 2\log N.
}
\end{claim}
\begin{proof}We write
  \[
  |U^{i,j,k}| = \sum\limits_{v\in U^{i,j-1,k}} \textbf{1}_{B_v},
  \]
  where the event $B_v$ is the that vertex $v$ is a $\kappa$-survivor. We have
\mult{SizeUijk}{
  \E(|U^{i,j,k}_T|)=O\brac{\k N(1-\e(1-\e))^{\k i}\cdot e^{-(2-o(1))\k k\th_k}\cdot
    (1-\e(1-\e))^{\k(j-1)}\cdot
    \e e^{-\e\k}}\\ =O\brac{Ne^{-\e(1-\e)\k(i+j+(2-o(1))k)}}.
}
where $\th_k=1_{k\geq \ell_1/\k}$.

{\bf Explanation:} 
For a fixed vertex in $K_2$, the expected number of vertices at level $t$ of a G-W tree rooted at this vertex will be at most $(1-\e(1-\e))^{t}$.  Each vertex $v$ in such a level has probability $\Pr\brac{B_v} \leq \Pr\brac{L_{\kappa} \neq \emptyset}$ of being a $\k$-survivor and we use Lemma \ref{lemdepth} to upper bound $\Pr\brac{L_{\kappa} \neq \emptyset}$ by $O(\e e^{-\e \kappa})$.  Wald's idenity implies that the expected number of vertices in the G-W tree rooted at a fixed vertex lying in $U^{i,j,k}$ is thus $(1-\e(1-\e))^{\k(j-1)}\cdot  \e e^{-\e\k}$.

  In expectation there are $O(\k N(1-\e(1-\e))^{\k i}\cdot e^{-(2-o(1))\k k\th_k})$ vertices $w\in K_2$ for which $e(w)\in \wh A_k$ and $d_1(w)\geq \k i$; here we have used  Lemma \ref{maxReff} to bound the probability that a vertex $w$ for which $d_1(w)\geq \k i$ has $e(w)\in \wh A_k$, and applied Wald's identity as before.  Applying Wald's idenitity a final time gives \eqref{SizeUijk}



Equation \eqref{SizeUj} follows from the Markov inequality.  (There are $O(\log^3N)$ choices for $i,j,k$ and there is a factor $e^{(1-o(1))k}\geq e^{(1-o(1))k_0}$ difference between the expressions in \eqref{SizeUj}, \eqref{SizeUijk}.)
\end{proof}

Given \eqref{SizeUj}, we proceed to bound the sum in \eqref{bound1} term by term.  (We wish to show that the sum is $o(T_\d)$.) To bound the probabilities $\Pr\brac{Z_{i,j,k} \geq t}$, we will use the concentration of the maximum of a Gaussian process around its expectation, whereas the expectations $\E(Z_{i,j,k})$ will be simply treated with the union bound.

First we estimate the expectations.
\begin{claim}
For $i,j\geq 0,k \geq k_0$, 
\begin{equation}\label{expect}
\E(Z_{i,j,k}) \leq e^{-\d/2}T_\d.
\end{equation}
\end{claim}
\begin{proof}
For $v\in U^{i,j,k}$, we know that $\eta_v$ has variance at most $\k(i+j+k+1)$ (by the definition of $U^{i,j,k}$, the graph-distance from $v$ to $K_2$ is $\k j$ and $\k(i+k+1)$ comes from the definition of $\wh A_k$. It then follows from \eqref{emax} in Section \ref{normal} and $|U^{i,j,k}| \leq CNe^{-\e(1-\e)\k(i+j+k)}$ that
\begin{equation}\label{EZj}
\tE(Z_{i,j,k})\leq (2\log (CNe^{-\e(1-\e)\k(i+j+k)}))^{1/2}\cdot (\k(i+ j+k+1))^{1/2}.\end{equation}
It follows from $2(xy)^{1/2}\leq  x+y$ that we can write 
\begin{align*}
\tE(Z_{i,j,k})  &\leq(2\e^{-1})^{1/2}
  (\k\e (i+j+k))^{1/2} (\log (CN)-\e(1-\e)\k (i+j+k)))^{1/2}\\
&\leq
  \frac{(1+7\e)\log (CN)}{(2\e)^{1/2}}\leq e^{-2\d/3}T_\d
\end{align*}
and then $\E(Z_{i,j,k})\leq \e^{-2\d/3}T_\d\leq e^{-\d/2}T_\d$.

{\bf Case 2: $k_0\leq k_0=\log^{1/2}N$: tree-like vertices}\\
We first let $U^{i}$ be the set of vertices $v$ of $K_2$ for which $\dist(v,t_{e(v)})\in [i\k,(i+1)\k-1]$. Given $K_1$ and $|E(K_1)|\approx 2N$ the size of $U^i$ is a binomial random variable with success probability at most $\m^{i\k}\leq (1-\e(1-\e))^{i\k}$. So, w.h.p. 
\[
|U^{i}|\leq 2 Ne^{-\e(1-\e)i\k}+\log^{10}N, \quad\text{for all }0\leq i\leq 2\log N.
\]
The first term come from the Chernoff bounds and the $\log^{10}N$ term is there for the case where the expectation $Ne^{-\e(1-\e)i\k}$ is less than $\log^2N$. In which case we just use the Markov inequality. This estimate is valid conditonal on $\cU$.

For each $v\in U^{i}$ recall that $p=(1-\e)^{1-\e}$ and let $p_j=p^{\k(j-1)}\cdot \e e^{-\e \k}$ bound the probability that $v$ has a descendant at level $j\k$ that is a $\k$-survivor. Then if $U^{i,j}$ denotes the set of descendants of such vertices $v\in U^i$, we have
\[
\tE(|U^{i,j}|)\leq |U^{i}|p_j\leq  (2Ne^{-\e(1-\e)i\k}+\log^{10}N)p_j.
\]
Applying the Chernoff bounds we see that conditional on $\cU$, w.h.p.
\begin{align*}
|U^{i,j}|&\leq 2(2 Ne^{-\e(1-\e)i\k}+\log^{10}N)p_j+\log^{10}N\\
&\leq 4 Ne^{-\e(1-\e)(i+j-1)\k}\cdot\e e^{-\e\k}+2\log^{10}N.
\end{align*}
It then follows using \eqref{emax} that for all $k\leq k_0=\log^{1/2} N$ that
\mult{EZ}{
\tE(Z_{i,j,k})\leq\\
(2\e^{-1})^{1/2}\brac{1+\frac{2\log\log N}{\log N}}(\k\e (i+j+\log^{1/2}N))^{1/2} (\log (4N)-\e(1-\e)\k (i+j))^{1/2}.
}
If now $i+j\leq \frac{1}{100}\log N$ then we see that
\[
\E(Z_{i,j,k})\leq \frac{\k^{1/2}\log N}{9}\leq \frac{T_\d}4.
\]
If $i+j\geq \frac{1}{100}\log N$ then we use $2(xy)^{1/2}\leq  x+y$ and $(i+j+\log^{1/2}N)\leq (i+j)\brac{1+\frac{100}{\log^{1/2}N}}$. Applying this in \eqref{EZ} gives
\[
\tE(Z_{i,j,k})\leq\frac{\brac{1+\frac{101}{\log^{1/2}N}}}{(2\e)^{1/2}}(\log(4N)+4\e\log N)\leq \frac{e^{\d/2}\log N}{(2\e)^{1/2}}\leq \e^{-\d/2}T_\d.
\]
{\bf Case 3: Non-tree-like vertices}\\
Claim \ref{cl2a} says that w.h.p. there are at most  $\log^{100}N$ non-tree-like vertices of $K_1$, we have
\[
\tE(|U_N^{i,j,k}|\mid \text{Claim \ref{cl2a}})=O(\log^{100}Ne^{-\e(1-\e)\k(i+j)})
\]
and so w.h.p. 
\[
|U_N^{i,j,k}|=O(\log^{200}Ne^{-\e(1-\e)\k(i+j)}).
\]
And then, using the bound of $\frac{3\log N}{\e}$ on the diameter from \cite{DKLP2} to bound effective resistance in $K_2$, we have
\mults{
\E(Z_{i,j,k})=O(\log (C\log^{200}Ne^{-\e(1-\e)\k(i+j)})^{1/2}(\e^{-1}\log N)^{1/2})\\
=O((\e^{-1}\log N \log\log N)^{1/2})=o(T_\d)
}
and we can continue as in \eqref{VZj}.

This completes our estimates for $\E(Z_{i,j,k})$. 
\end{proof}

We proceed to estimate the probability the probability that $Z_{i,j,k}$ significantly exceeds its mean.

To estimate this probability we use the Gaussian concentration for the maximum, \eqref{concnorm} in Section \ref{normal}. As already remarked, this inequality will not be affected by the conditioning and it yields
\begin{equation}
\Pr(Z_{i,j,k}\geq \E(Z_{i,j,k})+t) \leq 2\exp\set{-\frac{t^2}{2(i+j+k+1)\k}}\leq 2\exp\set{-\frac{t^2}{13\k\log N}},\label{VZj}
\end{equation}
where in the last inequality we use $i,j,k\leq 2\log N$. Thus,
\mult{int1}{
\int_{t\geq T_\d}\Pr(Z_{i,j,k}\geq t)dt\leq \int_{t\geq T_\d}\exp\set{-\frac{(t-\E(Z_{i,j,k}))^2}{13\k\log N}}dt\\  =\sqrt{13\k\log N}\int_{u\geq \frac{T_\d-{\bf E}(Z_{i,j,k})}{\sqrt{13\k\log N}}}e^{-u^2}du = O\brac{\k^{1/2}\log^{1/2}N\exp\set{-\frac{(T_\d-\E(Z_{i,j,k}))^2}{13\k\log N}}}.
}
Plugging \eqref{expect} into \eqref{int1} we see that 
\begin{align*}
\exp\set{-\frac{(T_\d-\E(Z_{i,j,k}))^2}{13\k\log N}}\leq \exp\set{-\frac{(1-e^{-\d/2})^2T^2_\d}{13\k\log N}} &\leq \exp\set{-\frac{(1-e^{-\d/2})^2e^{2\d}\log N}{26\k\e}} \\
&\leq N^{-c\d^2}
\end{align*}
for some universal constant $c > 0$, as $\k \e \leq 2$, $e^{2\d} \to 1$ and $(1-e^{-\d/2})^2 \approx \delta^2/4$.

So,
\beq{P2}{
\int_{t\geq T_\d}\Pr(Z_{i,j,k}\geq t)dt\leq \k^{1/2}\log^{1/2}N\cdot N^{-c\d^2}\leq N^{-c\d^2}T_\d.
}
Thus
\begin{align}
\sum_{i,j,k=0}^{2\log N} \int_{t\geq T_\d}\Pr(Z_{i,j,k}\geq t)dt \leq 8N^{-c\d^2}T_\d\log^3N  &\leq  \exp\set{-\frac{c\log N}{\log^{2/3}N} +O(1)+ \log\log N}T_\d\notag\\
 &= o(T_\d).\label{first}
\end{align}

\subsubsection{Proof of  Lemma \ref{extralemmaB}}
To prove Lemma \ref{extralemmaB} we let $W_k$ denote the set of vertices whose distance to $K_2$ is divisible by $k$.  Our goal now is to show that a general vertex $v$ is $\eta$-close to some vertex $u(v)\in U$, i.e. as measured by $(\eta_v-\eta_u)$; we will do this by showing that $v$ is $\eta$-close to its  $H$-nearest (as measured by graph distance) ancestor $y\in W_\k$; this will suffice since our choice of $U$ ensures that some vertex $u\in U$ has the property that $y$ is also the $\eta$-closest ancestor of $u$ in $W_\k$.
\begin{figure}[h]
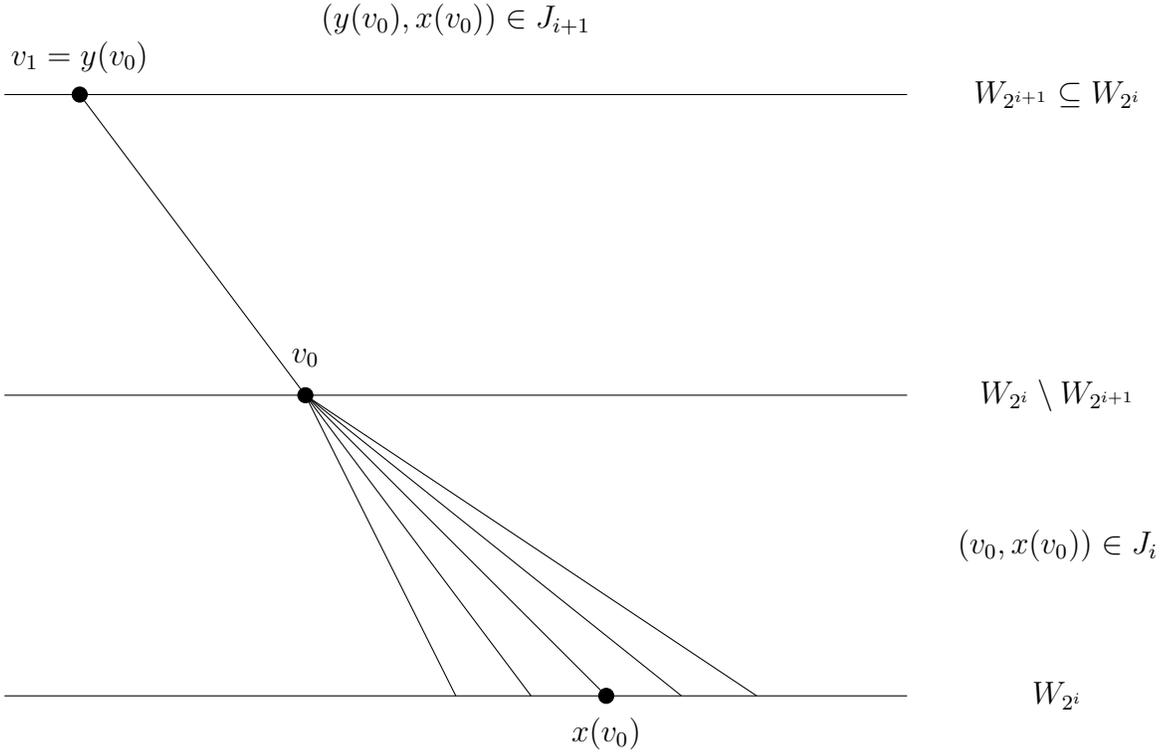

\begin{center}
\pics{1}{
\draw (0,0) -- (12,0);
\draw (0,4) -- (12,4);
\draw(0,8) -- (12,8);
\draw [fill=black] (1,8) circle [radius=0.1];
\node at (1,8.5) {$v_1=y(v_2)$};
\draw [fill=black] (4,4) circle [radius=0.1];
\node at (4,4.5) {$v_2$};
\draw (1,8) -- (4,4);
\draw [fill=black] (8,0) circle [radius=0.1];
\node at (8,-.5) {$x(v_2)$};
\draw (4,4) -- (8,0);
\draw (4,4) -- (9,0);
\draw (4,4) -- (10,0);
\draw (4,4) -- (7,0);
\draw (4,4) -- (6,0);

\node at (6,9) {$(y(v_2),x(v_2))\in J_{i+1}$};
\node at (14,8) {$W_{2^{i+1}}\subseteq W_{2^i}$};
\node at (14,4) {$W_{2^i}\setminus W_{2^{i+1}}$};
\node at (14,0) {$W_{2^i}$};
\node at (14,2) {$(v_2,x(v_2))\in J_i$};
}
\caption{The sets $W_k,J_k$.}
\label{Fig1}
\end{center}
\end{figure}

We will consider sets $J_0,J_1,J_2,\dots,J_{\ell_0},\ell_0=\rdup{\log_2\k}$ of {\em ordered pairs} of vertices in $H$ with the following properties (see Figure \ref{Fig1}): 
\begin{enumerate}[{\bf A}]
\item For $(v_1,v_2)\in J_i$, we have that $v_1,v_2\in W_{2^i}$, and that $v_2$ is a $2^i$-descendant of $v_1$.  
\item $J_0$ is the set of all edges in $H$ that are outside of $K_2$,
\item \label{representative} For each $i$, we have for each $2^{i}$-survivor $v_2\in W_{2^{i}}\setminus W_{2^{i+1}}$ belonging to $\pi_2(J_i)$, that exactly one $2^{i}$-descendant $x(v_2)\in W_{2^{i+1}}$ of $v_2$ is paired in $J_{i+1}$ with its $2^{i+1}$-ancestor $v_1\in W_{2^{i+1}}$.
\item \label{samerep} For all $i$, $\pi_2(J_{i+1})\subset \pi_2(J_{i})$.  (Here $\pi_j$ is the projection function returning the $j$th coordinate of a tuple.)
\end{enumerate}

Notice that pairings $J_0,J_1,\dots,J_{\ell_0}$ with these properties exist by induction; having constructed $J_0,\dots,J_i$, we construct $J_{i+1}$ by choosing pairs via properties \textbf{\ref{representative}} and \textbf{\ref{samerep}}; in particular, for each $2^i$ survivor $v_2$ in $\pi_2(J_i)$ at distance $k2^i$ from $K_2$ for odd $k$, we choose a $2^i$ descendant $x(v_2)$, and add the pair $(v_1,x(v_2))$ to $J_{i+1}$, where $v_1$ is the $2^{i+1}$ ancestor of $x(v_2)$ (and the $2^i$ ancestor of $v_2$).

So we fix some choice of the pairings $J_0,\dots,J_{\ell_0}$. 
We write $\bar J_i$ for the set of unordered pairs which occur (in some order) in $J_i$.  The heart of our argument is the following lemma.
\begin{lemma}\label{lm:paths}
  Given any vertex $v\in V$, let $\alpha(v)$ be its $H$-closest ancestor in $W_\k$. There is a sequence $v=v_0,v_1,v_2,\dots,v_t=\alpha(v)$ such that:
  \begin{enumerate}[(a)]
    \item For each $j=1,\dots,t$, $\{v_{j-1},v_j\}\in \bar J_i$ for some $i$.
    \item For each $i=0,\dots,\ell_0$, at most $1+2(\ell_0-i)$ of the pairs $\set{v_0,v_1},\set{v_1,v_2},\dots,\set{v_{t-1},v_t}$ belong to $\bar J_i$.
  \end{enumerate}
\end{lemma}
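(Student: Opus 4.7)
The plan is to prove Lemma~\ref{lm:paths} by induction on $i=0,1,\ldots,\ell_0$, producing at each stage a sequence from $v$ to $\alpha_i(v)$, the nearest ancestor of $v$ in $W_{2^i}$, that uses at most $1+2(i-j)$ pairs from $\bar J_j$ at every level $j\leq i$. Taking $i=\ell_0$ yields the lemma, since $\alpha_{\ell_0}(v)=\alpha(v)$. The base case $i=0$ is trivial, as $\alpha_0(v)=v$ and the empty sequence suffices.

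For the inductive step, suppose the path to $\alpha_{i-1}(v)$ has already been constructed. If $\alpha_{i-1}(v)\in W_{2^i}$ then $\alpha_i(v)=\alpha_{i-1}(v)$ and nothing is appended. Otherwise $\alpha_{i-1}(v)\in W_{2^{i-1}}\setminus W_{2^i}$ and $\alpha_i(v)$ is its $2^{i-1}$-ancestor. Writing $v_0:=\alpha_{i-1}(v)$, Property~\ref{representative} supplies a representative $x:=x(v_0)\in W_{2^i}$ together with the pair $(\alpha_i(v),x)\in J_i$. The crucial leverage is Property~\ref{samerep}: since $x\in\pi_2(J_i)\subset\pi_2(J_{i-1})$, the unique $J_{i-1}$-pair with second coordinate $x$ has first coordinate equal to the $2^{i-1}$-ancestor of $x$, which by construction is exactly $v_0$. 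Hence $(v_0,x)\in J_{i-1}$, and we append the two-hop $v_0\to x\to\alpha_i(v)$, adding one pair from $\bar J_{i-1}$ and one from $\bar J_i$.

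The bookkeeping is then immediate: for $j<i-1$ the inductive count is inherited at $\leq 1+2((i-1)-j)=1+2(i-j)-2$, well within the new budget $1+2(i-j)$; for $j=i-1$ the added pair gives at most $1+1=2\leq 3=1+2(i-(i-1))$; for $j=i$ the single new pair gives $1=1+2(i-i)$; and for $j>i$ the count is zero. Iterating up to $i=\ell_0$ establishes the lemma in the ``generic'' case.

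The main obstacle is the degenerate case in which $\alpha_{i-1}(v)$ fails to be a $2^{i-1}$-survivor, so that Property~\ref{representative} does not produce the representative $x(v_0)$ and the two-hop extension breaks. Here $v$ necessarily lies within tree-distance $2^{i-1}-1$ of $\alpha_{i-1}(v)$, and I would handle this fallback by a single $J_{i-1}$-jump $(\alpha_i(v),\alpha_{i-1}(v))\in J_{i-1}$. The existence of this jump is arranged at the time of constructing the $J_i$'s: the $2^{i-2}$-ancestor of $\alpha_{i-1}(v)$ is automatically a $2^{i-2}$-survivor through $\alpha_{i-1}(v)$ itself, and the freedom granted by Property~\ref{representative} in choosing the representative $x(\cdot)$ at level $i-1$ allows us to place $\alpha_{i-1}(v)\in\pi_2(J_{i-1})$. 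Verifying that these fallback choices can be made consistently across all degenerate vertices simultaneously --- compatibly with the ``generic'' choices used elsewhere --- is where I expect the principal technical care of the proof to lie.
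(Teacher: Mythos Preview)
Your generic case is clean and, were it always applicable, would actually give the stronger bound of at most two pairs from each $\bar J_j$. The difficulty you flag in the degenerate case---when $v_0=\alpha_{i-1}(v)$ fails to be a $2^{i-1}$-survivor---is real, and your proposed workaround does not close it. The obstruction is concrete: a single vertex $w\in W_{2^{i-2}}\setminus W_{2^{i-1}}$ may have two distinct $2^{i-2}$-descendants $u_1,u_2\in W_{2^{i-1}}\setminus W_{2^i}$, both non-$2^{i-1}$-survivors. Your fallback asks that each such $u$ lie in $\pi_2(J_{i-1})$, but membership in $\pi_2(J_{i-1})$ means precisely that $u=x(w')$ for its (unique) $2^{i-2}$-ancestor $w'$; since $u_1,u_2$ share the ancestor $w$, only one can be chosen, and neither can enter $\pi_2(J_{i-1})$ through any other route. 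So the consistent choice you hope for is impossible in general, and the argument stops here.

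The paper sidesteps the survivor issue entirely by never insisting that the walk reach $\alpha_{i-1}(v)$ at intermediate stages. Instead it tracks two parameters, $\phi(v)=\max\{i:v\in W_{2^i}\}$ and $\psi(v)=\max\{i\le\phi(v):v\in\pi_2(J_i)\}$, and shows that from any current position $v$ one can move---via one or two pairs from $\bar J_{\psi(v)}$---to a vertex $a(v)$ where $(\phi,\psi)$ has strictly increased lexicographically. The point is that the ancestor $z$ used at each step is automatically a survivor, \emph{because the current vertex is itself a descendant of $z$ at the right depth}; this is exactly the guarantee your induction loses once it lands on a non-surviving $\alpha_{i-1}(v)$. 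The bound $1+2(\ell_0-i)$ then falls out of counting how many steps can have $\psi=i$ before $\phi$ exceeds $i$. If you want to rescue your inductive scheme, you would need to allow the path endpoint at stage $i$ to be some vertex of $W_{2^i}$ other than $\alpha_i(v)$ (specifically, one lying in $\pi_2(J_i)$), at which point the argument essentially becomes the paper's.
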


\begin{proof}[Proof of Lemma \ref{lm:paths}]\ Fix a vertex $v \in V$. Our goal is to find \emph{a chain} $v=v_0,v_1,v_2,\dots,v_t=\alpha(v)$ such that its  consecutive \emph{links} $\{v_{j-1},v_j\}$ are all in the sets $J_i$ and each set $J_i$ contains at most $1+2(\ell_0-i)$ links. We shall do this recursively and in order to keep track of it, we need the following parameters
  \begin{align*}
    \phi(v)&=\max\set {0\leq i\leq \ell_0\mid v\in W_{2^i}}\\
    \psi(v)&=\max\set {0\leq i\leq \phi(v)\mid v\in \pi_2(J_i)}.
  \end{align*}
 \begin{claim}\label{cl1}
Given any $v$, there is a vertex $a(v)$ such that either
  \begin{enumerate}[(a)]
  \item \label{aorz}  $\phi(a(v))>\phi(v)$ and $(a(v),v)\in J_{\phi(v)}$, or else
  \item $\phi(a(v))=\phi(v)$ and $\psi(a(v))>\psi(v)$, and there exists $z(v)$ such that $(z(v),a(v))$ and $(z(v),v)$ are both in $J_{\psi(v)}$.
  \end{enumerate}
\end{claim}
\begin{proof} 
Consider the vertex $v$, and let $i=\phi(v)$.  We consider two cases:\\
  \noindent \textbf{Case 1:} $\psi(v)=\phi(v)$.  In this case, by definition of $\psi(v)$, we have that there is a vertex $a(v)$ such that $(a(v),v)$ in $J_i$.  In particular, as $2^i$ is the largest power of $2$ such that $v\in W_{2^i}$ and $v$ is a $2^i$ descendant of $a(v)$, we have that $a(v)\in W_{2^{i+1}}$; that is, that $\phi(a(v))\geq i+1$, as claimed.\\
  \noindent \textbf{Case 2:} $\psi(v)=j<\phi(v)$.  In this case, by definition of $\psi(v)$, we have that there is a vertex $z$ such that $(z,v)$ in $J_j$.  Now by Property \ref{representative} of the pairings $\{J_i\}$, $z$ has a $2^j$-descendant $a(v)$ which is in $\pi_2(J_{j+1})$; in particular, we have that $\psi(a(v))\geq j+1>\psi(v)$.  (Note for clarity that $a(v)$ and $v$ are at the same distance from $K_1$ in Case 2 and so $\phi(a(v))=\phi(v)$.)  And by Property \ref{samerep}, $a(v)\in \pi_2(J_j)$ as well, and thus $(z,a(v)) \in J_j$, completing the proof of the claim.
This concludes the proof of Claim 1, and thus also Lemma \ref{lm:paths}.
\end{proof}

Observe that Lemma \ref{lm:paths} follows from Claim \ref{cl1}; indeed, one can construct the claimed sequence recursively as follows: given the partially constructed sequence $v=v_0,v_1,\dots,v_s$ we append either the single term $a(v_s)$ or the two terms $z(v_s),a(v_s)$, according to which case of part \eqref{aorz} of the claim applies, and terminate if $\phi(a(v_s))=\ell_0$. Observe that a consecutive pair $v,v'$ in $v_0,\dots,v_t$ belongs (as an unordered pair) to $\bar J_i$ only if either
  \begin{enumerate}[(i)]
  \item $v'=a(v)$ and $\phi(v')>\phi(v)$, or
  \item $v'=z(v)$, the term after $v'$ is $v''=a(v)$, and $\psi(v'')>\psi(v)$, or
  \item the term before $v$ is $\hat v$, $v=z(\hat v)$, $v'=a(\hat v)$, and $\psi(v')>\psi(\hat v)$.
  \end{enumerate}
  Since $(\phi(v),\psi(v))$ increases lexicographically in this way along the path, we have the claimed upper bound of $1+2(\ell_0-i)$ on the number of of consecutive pairs from $\bar J_i$. This finishes the proof of Lemma \ref{lm:paths}.
\end{proof}

Now we are ready to finish the proof of Lemma \ref{extralemmaB}.  Thanks to Lemma \ref{lm:paths}, we can decompose $\eta_v - \eta_{\alpha(v)} = \sum_{j=1}^t \eta_{j-1} - \eta_j$ and using a chaining argument as before we get 
\begin{align}
\E_{H,\eta}\left(\max_{v \in V} |\eta_v - \eta_{\alpha(v)}|\right) &\leq \E_H\brac{\sum_{i=0}^{\ell_0} (1+2(\ell_0-i))\E_\eta \max_{\{a,b\} \in \bar J_i} |\eta_a - \eta_b|} \nonumber\\
&\leq O\brac{\E_H\brac{\sum_{i=0}^{\ell_0} (\ell_0-i+1)\sqrt{2^i}(\sqrt{2\log|J_i|})}}.\label{Jensen}
\end{align}
Here, $\E_{H,\eta}$ is expectation over the larger space of the random graph $H$ together with the GFF, while $\E_\eta$ is the expectation of a fixed Gaussian Free Field and $\E_H$ is an expectation just over the random choice of $H$ (this is to handle $\sqrt{\log |J_i|}$, as we do not have a high probability statement about $|J_i|$ covered by APOH and we will only be able to control $\E_H |J_i|$). The first inequality follows from part (b) of Lemma \ref{lm:paths} and the second inequality follows from the union bound on the maximum, \eqref{emax}.

Given \eqref{Jensen}, our task is to bound $\E_H(|J_i|)$ for $0\leq i\leq \ell_0$ and then show that the sum in \eqref{Jensen} is $o(T_\delta)$. We have from Property \ref{representative} that  
\beq{Jisize}{
\E_H (|J_i|) =O\brac{ \E_H|W_{2^i}|\times \frac{1}{2^i}} = O\brac{ (\e^2 n)\times\sum_{j\geq 0}\m^{j2^i}\times \frac{1}{2^i} } = O\brac{ \frac{\e^2n}{2^i(1-\m^i)}} = O\brac{ \frac{\e n}{2^{2i}} }
}
(the number of vertices on $K_2$ is $\e^2n$ and $\mu^{j2^i}$ bounds the expected number of vertices on level $j2^i$.) Going back to \eqref{Jensen} we see that 
\beq{only}{
\E_{H,\eta}\left(\max_{v \in V} |\eta_v - \eta_{\alpha(v)}|\right)  \leq \sum_{i=0}^{\ell_0} (\ell_0-i+1)\sqrt{2^i}\sqrt{2\log\left(\frac{\e n}{2^{2i}}\right)}.
}
Here we use that $\E_H(\sqrt{\log|J_i|}) \leq \sqrt{\log \E(|J_i|)}$, by Jensen's inequality ($\log^{1/2}x$ is a concave function) and \eqref{Jisize}. 

It only remains to deal with the R.H.S. of \eqref{only}. Given $v \in V$, we let $u(v)$ to be a closest vertex  in $U$ to $v$ (in the graph distance). Suppose for now that $u(v) = \alpha(v)$, where $\alpha(v)$ is provided by Lemma \ref{lm:paths}.

To get a high probability result, we will use the Markov inequality: if we denote $Y = \E_\eta\left(\max_{v \in V} |\eta_v - \eta_{\alpha(v)}|\right)$, we have $\Pr_H\brac{Y > (\log N)^{1/4}\E_H Y} \leq (\log N)^{-1/4}$ and this explains the $\log^{1/4}N$ factor in \eqref{lastone} below. We check that the ratio between the terms $i+1$ and $i$ in \eqref{only} equals
\[
\frac{\ell_0-i}{\ell_0-i+1}\sqrt{2}\sqrt{1 - \frac{2\log 2}{\log(\e n) - 2i\log 2}}
\]
which is strictly larger than, say $\frac{10}{9}$ for $0 \leq i \leq \ell_0-10$. Thus the last $10$ terms dominate this sum and we get that w.h.p.
\beq{lastone}{
\E_\eta (\max_{v \in V} |\eta_v - \eta_{\alpha(v)}|) \leq O\brac{\log^{1/4}N\times \sqrt{2^{\ell_0}}\sqrt{2\log\left(\frac{\e n}{2^{2\ell_0}}\right)}} = O\brac{\frac{\log^{3/4} N}{\e^{1/2}}} = o(T_\delta).
}

This concludes the proof of Lemma \ref{extralemmaB} in the case $u(v) = \alpha(v)$. If $u(v) \neq \alpha(v)$, then since $\eta_v - \eta_{u(v)} = (\eta_v - \eta_{\alpha(v)}) + (\eta_{\alpha(v)} - \eta_{\alpha(\alpha(v))}) + (\eta_{\alpha(u(v))} - \eta_{u(v)})$, by the triangle inequality we can obtain the same bound as above up to the constant $3$.

\end{document}